\newtheorem{theorem}{Theorem}[section]
\newtheorem{proposition}[theorem]{Proposition}
\newtheorem{conjecture}[theorem]{Conjecture}
\newtheorem{corollary}[theorem]{Corollary}
\newtheorem{problem}[theorem]{Problem}
\newcommand{\io}{\iota}
\begin{document}
\title{Isolation number: Cartesian and lexicographic products and generalized Sierpi\'{n}ski graphs}
\author{
Boštjan Brešar$^{a,b}$
\and
Tanja Dravec$^{a,b}$
\and
Daniel P. Johnston$^{c}$
\and
Kirsti Kuenzel$^{c}$\\
\and
Douglas F.\ Rall$^{d}$\\
\and
Aleksandra Tepeh$^{b,e}$
}
\maketitle

\begin{center}
$^a$ Faculty of Natural Sciences and Mathematics, University of Maribor, Slovenia\\
$^b$ Institute of Mathematics, Physics and Mechanics, Ljubljana, Slovenia\\
$^c$ Department of Mathematics, Trinity College, Hartford, CT, USA\\
$^d$ Emeritus Professor of Mathematics, Furman University, Greenville, SC, USA\\
$^e$ Faculty of Electrical Engineering and Computer Science, University of Maribor, Slovenia\\
\end{center}
\medskip

\begin{abstract}
 The isolation number $\iota(G)$ of a graph $G$ is the minimum cardinality of a set $A\subset V(G)$ such that the subgraph induced by the vertices that are not in the union of the closed neighborhoods of vertices in $A$ has no edges. The invariant, known also under the name vertex-edge domination number of $G$, has attracted a lot of interest in recent years.  In this paper, we study the behavior of the isolation number under several graph operations, namely the Cartesian and the lexicographic product and the fractalization leading to generalized Sierpi\'{n}ski graphs.  We prove several upper and lower bounds on the isolation number of the Cartesian product of two graphs. We prove a lower bound for the isolation number of the prism $G\,\Box\, K_2$ over an arbitrary graph $G$, which in the case of bipartite graphs leads to the equality $\iota(G\,\Box\, K_2)=\gamma(G)$, where $\gamma(G)$ is the domination number of $G$. In particular, $\iota(Q_{n+1})=\gamma(Q_n)$ holds for all positive integers $n$, where $Q_n$ is the $n$-dimensional hypercube. For the lexicographic product $G\circ H$ we prove that its isolation number, under certain mild restrictions, equals the total domination number of the first factor $G$. We also prove sharp lower and upper bounds on the isolation numbers of the generalized Sierpi\'{n}ski graphs $S_G^t$, where $G$ is an arbitrary base graph. These bounds in the case of classical Sierpi\'{n}ski graphs, namely $S_{K_n}^t$, coincide and lead to the exact values $\iota(S_{K_n}^t)=(n-1)\cdot n^{t-2}$ for all dimensions $t\ge 2$. 
\end{abstract}

\noindent
{\bf Keywords:}  isolation number, isolating set, domination, Cartesian product, Sierpi\'{n}ski graph.\\

\noindent
{\bf AMS Subj.\ Class.\ (2020)}: 05C57, 05C69

 \section{Introduction}
\label{sec: intro}
In 2017, Caro and Hansberg~\cite{ch-2017} introduced the concept of isolation in graphs, which was later studied by a number of researchers. The most fundamental version of isolation in graphs is defined as follows. Let $G$ be a graph, and $A\subset V(G)$. Denote by $N[A]$ the set of vertices that are in $A$ or adjacent to a vertex in $A$. The set $A$ is an {\em isolating set} in $G$ if $V(G)-N[A]$ is an independent set in $G$ (that is, there are no edges between vertices in $V(G)-N[A]$). The minimum cardinality of an isolating set in $G$ is the {\em isolation number}, $\iota(G)$, of $G$. Note that the condition for a set to be isolating can be rephrased as $G-N[A]$ is $K_2$-free, and by extending this condition to $\cal F$-free graphs, where $\cal F$ is an arbitrary family of graphs, one gets the general definition of $\cal F$-isolation. One can view isolation as an extension of graph domination, which is one of the most studied topics of graph theory. Recall that the {\em domination number}, $\gamma(G)$, is the minimum cardinality of a {\em dominating set} in $G$, which is a set $A$ such that $N[A]=V(G)$. Clearly, $\gamma(G)\ge \iota(G)$ in any graph $G$. 

The isolation number was studied from various perspectives and under different names. Already in 2010, Lewis et al.~\cite{lewis}
 introduced the concept of vertex-edge domination, which coincides with the basic version of isolation, and they characterized the trees $T$ with $\iota(T)=\gamma(T)$. Caro and Hansberg~\cite{ch-2017} (and independently \.{Z}yli\'{n}ski~\cite{z-2019}) proved 
a general upper bound, $\iota(G)\le n(G)/3$, which holds for all connected graphs whose order $n(G)$ is at least $6$. They improved this bound for several well known classes of graphs, such as
trees, bipartite graphs, Cartesian products of cycles, and maximal outerplanar graphs (see~\cite{tjk-2019} for a refinement concerning the latter class of graphs). Lema\'{n}ska et al.~\cite{lms-2024} characterized all unicyclic graphs and all block graphs $G$ that attain the value $\iota(G)=n(G)/3$. Furthermore, Boyer and Goddard in~\cite{bg-2024} proved a structural characterization of all graphs $G$ attaining that value. Ziemann and \.{Z}yli\'{n}ski proved the bound $\iota(G)\le 10n(G)/31$ for all cubic graphs $G$ with $n(G)>6$, and conjectured that the bound can be improved to $\iota(G)\le 2n(G)/7$, which would be sharp if true~\cite{zz-2020}. 
On the other hand, several lower bounds on $\iota(G)$ were proved in~\cite{ch-2017} when $G$ is a connected graph or belongs to some specific class of graphs. 

In this paper, we continue the investigation of isolation and focus on its behavior under several well known graph operations. Cartesian product is one of the four standard graph products~\cite{HIK}, and is arguably the most studied operation related to domination in graphs. Notably, the famous Vizing's conjecture, which states that $\gamma(G\,\Box\, H)\ge \gamma(G)\gamma(H)$ holds for every two graphs $G$ and $H$, is still unresolved despite numerous efforts in the last 50+ years; see two surveys~\cite{BDG-12} and~\cite[Chapter 18]{HHH3} and the references therein. Cartesian products, each factor of which is $K_2$, are hypercubes. Determining the domination number of an arbitrary hypercube is another challenging open problem, which motivates us to study their isolation numbers. Our motivation for studying isolation number in the class of lexicographic products came while we were searching for families of graphs in which isolation and domination numbers are the same, and it turned out that almost all lexicographic products have this property. Yet another rather well known class of graphs are the (generalized) Sierpi\'{n}ski graphs, originally introduced in relation to the Tower of Hanoi game; see a recent extensive survey~\cite{kmz-16+} where many other applications are presented. The main feature of (generalized) Sierpi\'{n}ski graphs is their fractal-like nature, and can be considered as a basic discrete version of fractals. 
While domination and total domination numbers are computationally hard problems in general graphs~\cite{HHH3}, they can be efficiently determined in several classes of graphs, in particular, in the (generalized) Sierpi\'nski graphs. Exact formulas for the domination numbers of Sierpi\'{n}ski graphs were established in~\cite{kmp-2003}; moreover, the domination number of generalized Sierpi\'{n}ski graphs was   studied in~\cite{rre}. More recently also the exact total domination numbers of arbitrary Sierpi\'nski graphs were proven~\cite{gkmmp-13}. A number of other graph invariants were studied in generalized Sierpi\'{n}ski graphs~\cite{er,erv,gs,kmz-16+,pjk}, which makes them one of the most popular classes of graphs. 

The paper is organized as follows.   In the next section, we give formal definitions of the concepts considered in the paper along with some preliminary observations. Section~\ref{sec:Cartesian} is concerned with the Cartesian product operation. We prove two upper bounds on the isolation number of the Cartesian product of two graphs and show that they are incomparable. For the first upper bound, we introduce the concept of the isolation graph of a graph, which may be of independent interest. The second upper bound on $\iota(G\,\Box\, H)$ is simply the product of vertex cover numbers of both factors. As mentioned in the context of domination number and Vizing's conjecture, lower bounds on the domination number (in Cartesian products) are in general more challenging. We obtain a lower bound on $\iota(G\,\Box\, H)$ by using the $2$-packing number of one factor and the isolation number of the other factor. For the prism $G\,\Box\, K_2$ over a graph $G$, we prove that $\iota(G\,\Box\, K_2)\ge\gamma(G)$, which holds as equality in bipartite graphs. The latter result leads to the formula $\iota(Q_{n+1})=\gamma(Q_n)$, which holds for an arbitrary hypercube $Q_n$. The main result of Section~\ref{sec:lex} is that the isolation number of the lexicographic product $G\circ H$ equals the total domination number,  $\gamma_t(G)$,  of $G$ as soon as $G$ and $H$ are connected graphs and $\iota(H)\ge 2$. In Section~\ref{sec:Sierp}, we consider the isolation numbers of generalized Sierpi\'{n}ski graphs $S_G^t$, where $G$ is an arbitrary base graph and $t$ is the dimension of $S_G^t$. For an arbitrary graph $G$, we prove a lower and an upper bound on $\iota(S_G^t)$, which rely on two invariants of the $2$-dimensional Sierpi\'{n}ski graph, $S_G^2$, over $G$. The invariants coincide in some graphs $G$, which in such cases leads to the exact values. In particular, we obtain the formula for the standard Sierpi\'{n}ski graphs $S_n^t$; namely, $\io(S_{K_n}^t)=(n-1)\cdot n^{t-2}$, which holds for all $t\geq 2$ and $n\ge 3$. We conclude the paper with several open problems.

\section{Definitions and preliminary results}
\label{sec:prelim}

In this paper, we consider  finite simple graphs. Let $G$ be a graph, and $v\in V(G)$. The {\em neighborhood}, $N(v)$, of $v$ is the set of vertices $u$ in $G$ such that $uv\in E(G)$. The {\em degree} of $v$, $d(v)$, is the cardinality of $N(v)$; that is,  $d(v)=|N(v)|$.  The {\em closed neighborhood} of $v$ is defined as $N[v]=N(v)\cup\{v\}$, while $N[A]=\cup_{v\in A}{N[v]}$. If $A\subseteq V(G)$, then by $G[A]$ we denote the subgraph of $G$ induced by $A$.
As mentioned earlier, a set $D\subset V(G)$ is a dominating set in $G$ if $N[D]=V(G)$. 
The following expressions are common in domination theory and will be used throughout. A set $A\subseteq V(G)$ {\em dominates} (vertices of) $N[A]$, and if $B\subseteq N[A]$, $B$ is {\em dominated by} $A$. In particular, a vertex $v$ {\em dominates} $N[v]$, and so every vertex adjacent to $v$ is {\em dominated by} $v$. 
A {\em $2$-packing} in $G$ is a set of vertices whose closed neighborhoods are pairwise disjoint. The {\em $2$-packing number}, $\rho_2(G),$ of $G$ is the largest cardinality of a $2$-packing in $G$. A set $D\subset V(G)$ is an {\em isolating set} in $G$ if $G-N[D]$ is either an empty graph or an edgeless graph, and $\iota(G)$ is the minimum cardinality of an isolating set in $G$.

A set $A$ is {\em independent} if no two vertices in $A$ are adjacent. The maximum cardinality of an independent set in $G$ is the {\em independence number}, $\alpha(G)$, of $G$. A set $B\subset V(G)$ is a {\em vertex cover} in $G$ if every edge in $G$ is incident with a vertex in $B$, and the minimum cardinality of a vertex cover in $G$ is the {\em vertex cover number}, $\beta(G)$, of $G$. By the Gallai Theorem, $\alpha(G)+\beta(G)=n(G)$ for any graph $G$. A set $M\subset E(G)$ is a {\em matching} in $G$ if no two edges in $M$ share an endvertex. The maximum cardinality of a matching in $G$ is the {\em matching number}, $\alpha'(G)$, of $G$. Let $[k]=\{1,\ldots,k\}$. A {\em (proper) $k$-coloring} of $G$ is a mapping $c:V(G)\to [k]$ such that $c^{-1}(i)$ is an independent set in $G$ for all $i\in [k]$. A graph $G$ is {\em $k$-colorable} if it admits a proper $k$-coloring.
 
Given a graph $G$ and $\tau\in \{\alpha,\beta,\alpha',\gamma,\iota,\rho_2\}$, $\tau(G)$-set represents an independent set, a vertex cover, a matching, a dominating set, an isolating set, or a $2$-packing, respectively, of cardinality $\tau(G)$.

The \textit{Cartesian product} of graphs $G$ and
$H$ is the graph, denoted as $G\, \Box\, H$, with vertex set $V (G)\times
V(H)$, where two vertices $(g,h)$ and $(g_1,h_1)$ are adjacent if and only if $g=g_1$ and $hh_1 \in E(H)$, or $gg_1 \in E(G)$ and $h=h_1$.
 The {\em lexicographic product} of $G$ and $H$ has vertex set $V(G) \times V(H)$ and two vertices $(g,h)$ and $(g_1,h_1)$ are adjacent if $gg_1 \in E(G)$, or if $g=g_1$ and $hh_1 \in E(H)$.  This product is denoted $G \circ H$.

For a fixed $h\in V(H)$, the subgraph $G^h$ of vertices in $G\,\Box\, H$ (respectively $G\circ H$) whose second coordinate is $h$ is the {\em $G$-fiber with respect to $h$}. In a similar way the $H$-fiber with respect to a vertex $g\in V(G)$ is defined in both graph products, and we denote it by $^{g}\! H$.
Note that every $G$-fiber is isomorphic to $G$ and every $H$-fiber is isomorphic to $H$.
Let $A\subseteq V(G)\times V(H)$ be a subset of vertices in $G\,\Box\, H$ (respectively, $G\circ H$). By $p_G(A)$ we denote the set $\{g\in V(G):\, (g,h)\in A\}$ and $p_H(A)$ is the set $\{h\in V(H):\, (g,h)\in A\}$.

We finish the section by presenting some straightforward observations concerning the isolation number.

If $S \subseteq V(G)$, then $\gamma_G(S)$ is the cardinality of a smallest $D \subseteq V(G)$ such that $S \subseteq N[D]$.
The {\em independence-domination} number of a graph $G$, as introduced by Aharoni and Szab\'{o}~\cite{aha-2009}, is the invariant defined by 
\[\gamma^{i}(G)=\max \{\gamma_G(S)\,:\, S \text{ is an independent set of } G \}\,.\]

It is clear that $\rho_2(G) \leq \gamma^{i}(G) \leq \gamma(G)$ for any graph $G$. On the other hand, $\iota(G)$ is, in general, not comparable with $\rho_2(G)$ and $\gamma^i(G)$. For instance, consider the subdivision graph $G=S(K_{1,n})$ of the star $K_{1,n}$, obtained by subdividing each edge of the star by one vertex. Clearly, $\iota(G)=1$ and $\rho_2(G)=n=\gamma^i(G)$. On the other hand, in the Cartesian product $K_n\,\Box\, K_n$, we have $\rho_2(K_n\,\Box\, K_n)=1$ whereas $\iota(K_n\,\Box\, K_n)=n-1$. In  addition, consider an example presented in~\cite{aha-2009}. Given a positive integer $n$, let $G$ be the line graph of the hypergraph of all $n$-subsets of the set on $n^2$ vertices. Note that $\gamma^i(G)=1$, yet $\iota(G)=n-1$.

In Section~\ref{sec:Cartesian}, $2$-packing will be used to obtain a lower bound on the isolation number of the  Cartesian product of two graphs. Similarly, $\gamma^i(G)$ is involved in the following lower bound on the isolation number of a graph.

\begin{proposition}
    If $G$ is any graph, then $\iota(G) \ge \gamma(G)-\gamma^{i}(G)$.  This bound is sharp.
\end{proposition}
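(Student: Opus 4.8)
The plan is to convert a minimum isolating set into a dominating set by appending a cheap dominating set of the (independent) set of vertices that it fails to dominate. First I would fix an isolating $\iota(G)$-set $A$ and put $S=V(G)\setminus N[A]$; by the definition of an isolating set, $S$ is an independent set of $G$. Since $S$ is independent, the definition of $\gamma^{i}(G)$ yields a set $D\subseteq V(G)$ with $S\subseteq N[D]$ and $|D|=\gamma_G(S)\le \gamma^{i}(G)$.

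Next I would observe that $A\cup D$ dominates $G$: indeed $N[A\cup D]=N[A]\cup N[D]\supseteq N[A]\cup S=V(G)$. Hence $\gamma(G)\le |A\cup D|\le |A|+|D|\le \iota(G)+\gamma^{i}(G)$, which rearranges to the claimed inequality $\iota(G)\ge \gamma(G)-\gamma^{i}(G)$. Note that we need neither $A$ and $D$ disjoint nor $S$ nonempty (if $A$ already dominates $G$, take $D=\emptyset$), so no case analysis is required.

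For the sharpness claim I would exhibit a concrete graph attaining equality; the cycle $C_4$ works. Here $\gamma(C_4)=2$ and $\iota(C_4)=1$, while $\gamma^{i}(C_4)=1$ because every independent set of $C_4$ consists of at most two mutually non-adjacent vertices and any such pair has a common neighbor. Thus $\iota(C_4)=1=\gamma(C_4)-\gamma^{i}(C_4)$. Taking $k$ vertex-disjoint copies of $C_4$ multiplies all three parameters by $k$, so the bound is tight even for arbitrarily large values.

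The argument is short and I do not anticipate a genuine obstacle. The only non-mechanical part is choosing a good witness for sharpness and checking $\gamma^{i}(C_4)=1$. One mild point to be careful about in the main inequality is that $D$ must be taken of size exactly $\gamma_G(S)$ (a smallest set dominating $S$), so that the bound $|D|\le\gamma^{i}(G)$ is guaranteed directly by the definition of the independence-domination number.
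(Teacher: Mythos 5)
Your proof is correct and follows essentially the same route as the paper: augment a minimum isolating set by a smallest set dominating the undominated (independent) remainder, bound the latter by $\gamma^{i}(G)$, and conclude $\gamma(G)\le \iota(G)+\gamma^{i}(G)$. Your sharpness witness $C_4\cong K_{2,2}$ is a special case of the paper's family $K_{m,n}$ with $2\le m\le n$, so nothing essentially differs.
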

\begin{proof}
  Let $I$ be an $\iota(G)$-set and let $A=V(G)-N[I]$.  Let $B$ be a smallest subset of $V(G)$ such that $A \subseteq N[B]$.  By the definition of the independence-domination number, $|B| \leq \gamma^{i}(G)$.  Furthermore, $B$ and $I$ are disjoint and $B \cup I$ is a dominating set of $G$.  This implies that
  \[\gamma(G) \leq |B|+|I| \leq \gamma^{i}(G) +\iota(G)  \,,\] or equivalently $\iota(G) \geq \gamma(G)-\gamma^{i}(G)$.  The bound is sharp as can be seen for $G=K_{m,n}$ such that $2 \le m \le n$.
\end{proof}

Note that $\gamma(G)\le \alpha'(G)$ in any graph $G$ without isolated vertices~\cite{bc-1979}. The bound $\iota(G)\le \alpha'(G)$, which immediately follows, can be improved by using the {\em saturation number} of $G$, which is the minimum cardinality of a maximal matching in $G$, and is denoted by $s(G)$. Indeed, let $A$ be a maximal matching of minimum cardinality in $G$, and let $X$ be a set of vertices with $|X|=|A|$ such that each edge in $A$ has exactly one endvertex in $X$. Since $A$ is a maximal matching in $G$, every edge in $E(G)-A$ has a common endvertex with an edge in $A$, therefore $G-N[X]$ has no edges. Thus, $\iota(G)\le |X|=|A| = s(G)$.

\section{Cartesian products of graphs}
\label{sec:Cartesian}

Isolating sets in Cartesian products of graphs have been considered in~\cite{ch-2017}, where lower and upper bounds on the isolation numbers of Cartesian products of two cycles/paths have been determined. We expand this investigation by considering isolation in general Cartesian products of graphs, and some important families of Cartesian products such as hypercubes.

In terms of domination, there exists a trivial upper bound for the domination number of the Cartesian product of two graphs (see~\cite[Proposition 18.2]{HHH3}): $$\gamma(G \,\Box\, H) \leq \min{\{\gamma(G)n(H),\gamma(H)n(G)\}}.$$
This bound cannot extend trivially to the isolation number of a Cartesian product, since for an $\iota(G)$-set $D$, the set $\{(g,h); g \in D, h \in V(H)\}$ is not an isolating set as soon as $H$ has at least one edge and $D$ is not a dominating set of $G$. The issue is not merely that the proof strategy is unusable, the bound is also invalid. For $n \geq 3$, consider the subdivision graph $S(K_{1,n})$ of the star $K_{1,n}$, obtained by subdividing each edge of the star by one vertex. Then $\io(S(K_{1,n}))=1$ and $$\io(S(K_{1,n}) \,\Box\, K_2)=n > \min{\{n(S(K_{1,n}))\io(K_2),n(K_2)\io(S(K_{1,n}))\}}=2.$$ The example also implies that the difference between  $\io(G \,\Box\, H)$ and the minimum of 
$n(G)\io(H)$ and $n(H)\io(G)$  can be arbitrary large. However, one can easily find an upper bound for the isolation number of the Cartesian product of two graphs in terms of the independence number, the domination number and the isolation number of the factor graphs.

\begin{proposition}\label{prop:trivialUpper}
    If $G$ and $H$ are graphs, then $$\io(G\Box H) \leq \min{\{\alpha(G)\io(H)+ \beta(G)\gamma(H), \alpha(H)\io(G)+ \beta(H)\gamma(G)\}}.$$
\end{proposition}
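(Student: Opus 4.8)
The plan is to prove the single inequality $\iota(G\,\Box\, H)\le \alpha(G)\iota(H)+\beta(G)\gamma(H)$; the companion bound $\iota(G\,\Box\, H)\le \alpha(H)\iota(G)+\beta(H)\gamma(G)$ then follows by interchanging the roles of the two factors (the Cartesian product is commutative up to isomorphism), and the assertion is the minimum of the two. To prove the displayed inequality, I would fix an $\alpha(G)$-set $I$ and set $C=V(G)\setminus I$; by the Gallai Theorem $C$ is a vertex cover of $G$ with $|C|=\beta(G)$. For every $g\in I$ choose an $\iota(H)$-set $D_g$ inside the $H$-fiber ${}^{g}\!H$ (which is isomorphic to $H$), and for every $g\in C$ choose a $\gamma(H)$-set $D_g$ inside ${}^{g}\!H$. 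Put $D=\bigcup_{g\in V(G)}D_g\subseteq V(G\,\Box\, H)$, a disjoint union over the fibers, so that $|D|=\alpha(G)\iota(H)+\beta(G)\gamma(H)$. It then remains to verify that $D$ is an isolating set of $G\,\Box\, H$, i.e.\ that every edge of $G\,\Box\, H$ has at least one endpoint in $N[D]$.

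Edges of $G\,\Box\, H$ come in two kinds. For a ``horizontal'' edge $(g,h)(g',h)$ with $gg'\in E(G)$, since $C$ is a vertex cover at least one of $g,g'$ lies in $C$, say $g\in C$; then $D_g$ is a dominating set of the copy ${}^{g}\!H$ of $H$, so $(g,h)\in N[D_g]\subseteq N[D]$ and the edge is covered. For a ``vertical'' edge $(g,h)(g,h')$ with $hh'\in E(H)$: if $g\in C$, then again $D_g$ dominates all of ${}^{g}\!H$, so both endpoints lie in $N[D]$; and if $g\in I$, I would argue by contradiction — were both $(g,h)$ and $(g,h')$ outside $N[D]$, then in particular both would be outside the closed neighborhood of $D_g$ taken within ${}^{g}\!H$, so $hh'$ would be an edge of the subgraph of $H$ obtained by deleting the closed neighborhood of the $\iota(H)$-set $D_g$; but that subgraph is edgeless by the definition of $\iota(H)$, a contradiction. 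Hence every edge of $G\,\Box\, H$ is covered by $N[D]$, so $D$ is an isolating set and the inequality follows.

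The argument is essentially bookkeeping, and I do not anticipate a real obstacle; the one point worth a moment's thought is why it is safe to place merely an $\iota(H)$-set, rather than a full dominating set, in the fibers over $I$. This works precisely because $I$ is independent: every horizontal edge is anchored in the vertex cover $C$ and is already handled there, leaving only the vertical edges inside the fibers over $I$ to worry about, and those are exactly what an $\iota(H)$-set placed inside such a fiber kills. It is also worth observing that any extra domination of a fiber ${}^{g}\!H$ with $g\in I$ coming from vertices of $D$ in neighboring fibers only shrinks the set of undominated vertices of that fiber, so it can do no harm; thus there is no bad interaction between fibers. Choosing $I$ to be a \emph{maximum} independent set (rather than an arbitrary one) is what makes the bound as stated, since $\iota(H)\le\gamma(H)$ means it pays to enlarge the set of fibers that receive only an isolating set.
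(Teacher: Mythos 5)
Your proposal is correct and follows essentially the same construction as the paper: isolating sets of $H$ in the fibers over a maximum independent set of $G$, dominating sets of $H$ in the fibers over the complementary vertex cover, with the roles of the factors reversed for the second bound. The paper merely states this set and asserts it is isolating, while you spell out the edge-by-edge verification; the content is the same.
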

\begin{proof}
    Let $A$ be an $\alpha(G)$-set, $I$ an $\io(H)$-set, and $D$ a $\gamma(H)$-set. Then $$S=\{(g,h)\,:\,g \in A,h\in I\} \cup \{(g,h)\,:\, g\in V(G)-A, h\in D\}$$ is an isolating set of $G \,\Box\, H$.  Reversing the roles of $G$ and $H$ completes the proof.
\end{proof}

In what follows, we will improve the bound from Proposition~\ref{prop:trivialUpper} by using  several definitions. If $G$ is a graph, then $\omega(G)$ denotes its {\em clique number}, which is the order of a largest complete subgraph of $G$. Next, if  $A$ is an $\iota(G)$-set, then let $L_A=V(G)-N[A]$.  That is, $L_A$ consists of vertices that are not dominated by $A$, and so $L_A$ is an independent set in $G$. Now, the {\em isolation graph} of $G$ is the graph $I(G)$ whose vertices are the $\iota(G)$-sets, and $A,B\in V(I(G))$ are adjacent whenever $L_A\cap L_B=\emptyset$.
Finally, for a graph $G$ and a positive integer $k$, the maximum cardinality of an induced $k$-colorable subgraph in $G$ will be denoted by $\alpha_k(G)$.

\begin{theorem}\label{thm:uperBoundProduct}
If $G$ and $H$ are graphs, then
$$\iota(G\,\Box\, H)\le \min\{\alpha_k(G)\iota(H)+(n(G)-\alpha_k(G))\gamma(H),\alpha_\ell(H)\iota(G)+(n(H)-\alpha_\ell(H))\gamma(G)\},$$
where $k=\omega(I(H))$ and $\ell=\omega(I(G))$. The bound is sharp.
\end{theorem}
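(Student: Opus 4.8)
The plan is to establish the first expression inside the minimum; the second follows by interchanging $G$ and $H$. Put $k=\omega(I(H))$ and fix $\iota(H)$-sets $A_1,\dots,A_k$ that form a clique in the isolation graph $I(H)$. By the definition of $I(H)$, the sets $L_{A_1},\dots,L_{A_k}$ are pairwise disjoint, and each $L_{A_i}=V(H)-N_H[A_i]$ is independent in $H$ (this is exactly what it means for $A_i$ to be isolating). Next fix $W\subseteq V(G)$ with $|W|=\alpha_k(G)$ for which $G[W]$ is $k$-colorable, let $c\colon W\to[k]$ be a proper coloring of $G[W]$, and set $W_i=c^{-1}(i)$. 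Finally let $D$ be a $\gamma(H)$-set. The candidate isolating set is
$$S=\bigcup_{i=1}^{k}\bigl(W_i\times A_i\bigr)\;\cup\;\bigl((V(G)-W)\times D\bigr);$$
that is, over each $g\in W_i$ we place the isolating set $A_i$ inside the $H$-fiber over $g$, and over each $g\notin W$ we place the dominating set $D$. Since the $W_i$ partition $W$, the pieces of $S$ are pairwise disjoint and $|S|=\alpha_k(G)\,\iota(H)+(n(G)-\alpha_k(G))\,\gamma(H)$.

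It remains to check that $S$ is isolating. Let $L=V(G\,\Box\, H)-N[S]$. Every vertex $(g,h)$ with $g\notin W$ belongs to $N[S]$, because $\{g\}\times D\subseteq S$ and $D$ dominates $H$; hence each vertex of $L$ has first coordinate in $W$, and if its first coordinate lies in $W_i$ then its second coordinate lies in $L_{A_i}$ (otherwise it would be dominated inside its own $H$-fiber). Now suppose $(g,h)$ and $(g',h')$ are adjacent vertices of $L$. If $g=g'$ and $hh'\in E(H)$, then with $g\in W_i$ we get $h,h'\in L_{A_i}$, contradicting independence of $L_{A_i}$. If $gg'\in E(G)$ and $h=h'$, then $g\in W_i$ and $g'\in W_j$ with $i\neq j$ (as $c$ properly colors $G[W]$), so $h\in L_{A_i}\cap L_{A_j}=\emptyset$, a contradiction. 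Thus $L$ is independent, $S$ is an isolating set, and the first bound holds.

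For sharpness it is enough to exhibit one family meeting the bound. Take $G=K_2$ and $H=K_n$ with $n\ge 3$: then $\iota(K_n)=\gamma(K_n)=1$, every singleton is an $\iota(K_n)$-set with empty leftover, so $I(K_n)\cong K_n$ and $k=\omega(I(K_n))=n$; since $\alpha_n(K_2)=2$ the bound equals $2\cdot 1+0\cdot 1=2$, and indeed $\iota(K_2\,\Box\, K_n)=2$ (one vertex leaves an undominated copy of $K_{n-1}$, whereas two well-chosen vertices dominate the whole product).

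The one genuinely non-obvious step is deciding on the construction: one has to see that the right parameter of $G$ to combine with $\iota(H)$ is the size of a largest induced $k$-colorable subgraph, precisely for those $k$ such that $H$ possesses $k$ isolating sets whose leftover sets can be made pairwise disjoint, i.e., a clique of size $k$ in $I(H)$. After that, verifying that $L$ has no edge is the short two-case argument above: the independence of each $L_{A_i}$ removes edges inside $H$-fibers, while the disjointness of the $L_{A_i}$ together with the properness of $c$ removes edges inside $G$-fibers; no domination across $G$-fibers is ever needed. The case $k=1$ recovers Proposition~\ref{prop:trivialUpper}, which is a useful sanity check.
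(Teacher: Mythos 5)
Your proof is correct and follows essentially the same route as the paper: the same set is constructed (color classes of a maximum induced $k$-colorable subgraph of $G$ paired with a clique $A_1,\dots,A_k$ of $\iota(H)$-sets in $I(H)$, plus a $\gamma(H)$-set over the remaining $G$-vertices), and the verification via disjointness of the leftover sets $L_{A_i}$ and properness of the coloring matches the paper's argument, with your sharpness example $K_2\,\square\,K_n$ generalizing the paper's $K_3\,\square\,K_2$.
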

\begin{proof}

Let $H'=I(H)$ be the isolation graph of $H$. Let $k=\omega(H')$ and let $\{A_1,\ldots , A_k\} \subseteq V(H')$ be a set that induces a clique in $H'$. Hence, for any $i \in [k]$, $A_i$ is an $\iota(H)$-set. Let $G' \subseteq G$ be the largest induced $k$-colorable subgraph of $G$. Furthermore, let $C_1,\ldots , C_k$ be the color classes of $V(G')$ obtained from an arbitrary $k$-coloring $c:V(G') \to [k]$. Let $D_H$ be an arbitrary $\gamma(H)$-set  and let $D_1=\{(g,h):\, g \in V(G)-V(G'),h \in D_H\}$. Now, let $D=D_1 \cup \bigcup_{i=1}^k 
 (C_i\times A_i)$. Since $|D|=\alpha_k(G)\io(H)+(n(G)-\alpha_k(G))\gamma(H)$, it remains to show that $D$ is an isolating set of $G \,\Box\, H$.
Clearly, $D_1$ dominates $(V(G)-V(G')) \times V(H)$. Furthermore, for any $i \in [k]$, $C_i \times A_i$ is an isolating set of the subgraph of $G \,\Box\, H$ induced by 
$C_i \times V(H)$. Let $I_i$ be the set of vertices in this subgraph that are not dominated by $C_i \times A_i$. By the definition of the isolation graph of $H$ it follows that for any $i \neq j$, $L_{A_i} \cap L_{A_j}=\emptyset$ (since $A_i$ and $A_j$ are adjacent in $H'$), which implies that for any $i,j \in [k]$, $I_i \cup I_j$ is an independent set in $G \,\Box\, H$. Consequently, $\cup_{i=1}^{k}I_i$ is an independent set in $G \,\Box\, H$ and thus $D$ is an isolating set of $G \,\Box\, H$, as desired. By reversing the roles of $G$ and $H$, the statement follows.

Note that $\iota(K_3)=1$ and $I(K_3)=K_3$, thus $\omega(I(K_3))=3$. Clearly, $\alpha_3(K_2)=2$. Thus, $\iota(K_3\,\Box\, K_2)=2=\alpha_3(K_2)\iota(K_3)$, which shows that the bound is sharp.
\end{proof}

The bound in Theorem~\ref{thm:uperBoundProduct} is sharp for an infinite family of pairs of graphs. 
Notably, take $G=K_n$ and $H=C_n$, where $n\ge 5$.  Note that $I(K_n)=K_n$, and so $\omega(I(K_n))=n$. Clearly, $\alpha_n(C_n)=n$, and so $\alpha_n(C_n)\iota(K_n)=n$.  On the other hand,  $\alpha_k(K_n)\iota(C_n)+(n(K_n)-\alpha_k(K_n))\gamma(C_n)\ge n\iota(C_n)>n$.  To prove that the bound in the theorem above is attained we need to prove that $\iota(K_n\,\Box\, C_n)=n$.

\begin{proposition}
\label{prp:KnCn}
 If $n\ge 5$, then $\iota(K_n\,\Box\, C_n)=n$.   
\end{proposition}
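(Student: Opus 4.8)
\medskip
\noindent\emph{Proof idea.}
The inequality $\iota(K_n\,\Box\, C_n)\le n$ is already available: it is the bound of Theorem~\ref{thm:uperBoundProduct} specialized to this pair (the $C_n$-side of the minimum), and it also follows directly, since identifying $V(K_n)$ with $V(C_n)=\mathbb{Z}_n$, the ``diagonal'' $\{(i,i):i\in\mathbb{Z}_n\}$ is a dominating set of $K_n\,\Box\, C_n$ (each of its vertices dominates a whole $K_n$-fiber). So the plan is to prove the matching lower bound $\iota(K_n\,\Box\, C_n)\ge n$.

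I would argue by contradiction from an isolating set $D$ with $|D|\le n-1$. Write $V(C_n)=\mathbb{Z}_n$; for $c\in\mathbb{Z}_n$ let $D_c=D\cap V(K_n^c)$ be the trace of $D$ on the clique fiber over $c$, and put $d_c=|D_c|$ and $A_c=p_{K_n}(D_c)\subseteq V(K_n)$, so that $|D|=\sum_{c}d_c$. The crux is a local claim, which I would prove first: \emph{if $d_c=0$, then $d_{c-1}+d_{c+1}\ge n-1$.} Indeed, a vertex $(a,c)$ of the clique $K_n^c$ is adjacent in $K_n\,\Box\, C_n$ only to the remaining vertices of $K_n^c$ and to $(a,c-1)$ and $(a,c+1)$; when $d_c=0$, we thus have $(a,c)\in N[D]$ only if $a\in A_{c-1}\cup A_{c+1}$. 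Since an isolating set can leave at most one vertex of the clique $K_n^c$ undominated (two such vertices would be adjacent in $K_n\,\Box\, C_n-N[D]$), we get $|A_{c-1}\cup A_{c+1}|\ge n-1$, hence $d_{c-1}+d_{c+1}\ge|A_{c-1}|+|A_{c+1}|\ge n-1$.

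The rest is bookkeeping on $\mathbb{Z}_n$. As $|D|<n$, some fiber is empty, say $d_{c_0}=0$; the claim at $c_0$ gives $|D|\ge d_{c_0-1}+d_{c_0+1}\ge n-1$, so in fact $|D|=n-1$ and, moreover, $d_c=0$ for every $c\notin\{c_0-1,c_0+1\}$. For $n\ge5$ a short computation in $\mathbb{Z}_n$ shows that $c_0+2$ and $c_0-2$ are distinct and that all of $c_0\pm2$ and $c_0\pm3$ lie outside $\{c_0-1,c_0+1\}$ (a coincidence would force $n$ to divide one of $1,2,3,4$), so the fibers over these positions are empty as well. Applying the claim at the empty fiber $c_0+2$ gives $d_{c_0+1}+d_{c_0+3}\ge n-1$, whence $d_{c_0+1}\ge n-1$ since $d_{c_0+3}=0$; symmetrically, applying it at $c_0-2$ gives $d_{c_0-1}\ge n-1$. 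Adding these yields $d_{c_0-1}+d_{c_0+1}\ge 2(n-1)>n-1$, contradicting $d_{c_0-1}+d_{c_0+1}=n-1$. This completes the argument.

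I do not expect a genuine obstacle. The local claim is essentially immediate from the fact that the $K_n$-fibers are cliques, and the only point requiring care is the modular arithmetic ensuring that the fibers over $c_0\pm2$ and $c_0\pm3$ really are empty — this is precisely where the hypothesis $n\ge5$ enters, and for $n=3,4$ this last step genuinely fails, consistent with the statement.
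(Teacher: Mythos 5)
Your proof is correct, and it reaches the lower bound by a somewhat different route than the paper. The paper first asserts $\gamma(K_n\,\Box\, C_n)=n$ (which also supplies the upper bound), takes a hypothetical isolating set $A$ with $|A|\le n-1$, picks an undominated vertex $(u_1,v_1)$, and deduces the precise shape of $A$: the $K_n$-fiber over $v_1$ is disjoint from $A$, so each of the $n-1$ vertices $(u_i,v_1)$, $i\ge 2$, must be dominated through exactly one of $(u_i,v_2),(u_i,v_n)$, which pins $A$ inside the two columns $v_2,v_n$; the contradiction is then the explicit undominated edge $(u_1,v_3)(u_1,v_4)$, and this is where $n\ge5$ enters. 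You instead work with the fiber traces $d_c$, isolate the same structural fact as a reusable local lemma (an empty clique fiber forces $d_{c-1}+d_{c+1}\ge n-1$, since at most one vertex of that clique may stay undominated), and close by applying the lemma again at the empty fibers $c_0\pm 2$, with $n\ge 5$ entering only through the modular arithmetic guaranteeing that $c_0\pm2,c_0\pm3$ avoid $\{c_0-1,c_0+1\}$; your checks of that arithmetic and of the inequality $d_{c\pm1}\ge |A_{c\pm1}|$ are all in order. What each buys: the paper's argument gives a concrete picture of a near-extremal set and a visible violating edge, while yours is more mechanical and self-contained — it needs no appeal to the (unproved in the paper) value of $\gamma(K_n\,\Box\, C_n)$, obtaining the existence of an empty fiber by pigeonhole alone and the upper bound from the explicit diagonal dominating set $\{(i,i):i\in\mathbb{Z}_n\}$.
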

\begin{proof}
   Let $V(K_n)=\{u_1,\ldots, u_n\}$ and $V(C_n)=\{v_1,\ldots, v_n\}$. First, note that $\gamma(K_n\,\Box\, C_n)=n$. Suppose that $\iota(K_n\,\Box\, C_n)<n$. Then, there exists an $\iota$-set $A$ of $K_n\,\Box\, C_n$, where $|A|\le n-1$, such that $A$ is not a dominating set. By symmetry we may assume that $(u_1,v_1)\in V(K_n\,\Box\, C_n)-N[A]$. Since $A$ is an isolating set, vertices $(u_2,v_1),(u_3,v_1),\ldots,(u_n,v_1)$ are dominated by $A$, but none of them belongs to $A$. This means that in each of the pairs $\{(u_i,v_2),(u_i,v_n)\}$, for all $i\in\{2,\ldots, n\}$,  exactly one vertex is in $A$. Therefore, $|A|=n-1$ and $A$ is determined up to the selection of vertices from the described pairs. However, $(u_1,v_3)$ and $(u_1,v_4)$ are adjacent vertices neither of which is dominated by $A$, which is a contradiction.  
\end{proof}

 From Proposition~\ref{prp:KnCn} and the discussion preceding it we infer that 
$$\iota(K_n\,\Box\, C_n)=n=\alpha_{\omega(I(K_n))}(C_n)\iota(K_n),$$
which gives an infinite family of pairs of graphs that attain the upper bound in Theorem~\ref{thm:uperBoundProduct}.

Clearly, $\alpha_k(G)\le n(G)$ in any graph $G$, and if $G$ is $k$-colorable, then $\alpha_k(G)=n(G)$. Based on this fact, the following result follows immediately from Theorem~\ref{thm:uperBoundProduct}.
\begin{corollary}
 If $G$ and $H$ are graphs such that $G$ is $\omega(I(H))$-colorable, then
$\iota(G\,\Box\, H)\le  n(G)\iota(H)$.
\end{corollary}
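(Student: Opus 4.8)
The plan is to obtain this as an immediate specialization of Theorem~\ref{thm:uperBoundProduct}, reading off the first term of the minimum. First I would set $k=\omega(I(H))$, so that the theorem supplies the bound
\[\iota(G\,\Box\, H)\le \alpha_k(G)\iota(H)+(n(G)-\alpha_k(G))\gamma(H).\]
The one thing to verify is that the hypothesis forces the $\gamma(H)$-term to disappear. Since $G$ is $\omega(I(H))$-colorable, the graph $G$ is itself an induced $k$-colorable subgraph of $G$; as $\alpha_k(G)$ is by definition the maximum order of such a subgraph and $\alpha_k(G)\le n(G)$ always holds, we conclude $\alpha_k(G)=n(G)$. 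Substituting $\alpha_k(G)=n(G)$ into the displayed inequality makes the coefficient $n(G)-\alpha_k(G)$ of $\gamma(H)$ equal to $0$, which leaves exactly $\iota(G\,\Box\, H)\le n(G)\iota(H)$.

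There is no real obstacle here, since all the work has been done in the proof of Theorem~\ref{thm:uperBoundProduct}; the corollary is just the degenerate case $V(G')=V(G)$ of that construction, i.e.\ the case where no $\gamma(H)$-dominated strip $D_1$ is needed. If one preferred a self-contained argument instead of quoting the theorem, the same construction applies: pick a clique $\{A_1,\dots,A_k\}$ in $I(H)$ (so each $A_i$ is an $\iota(H)$-set), take the color classes $C_1,\dots,C_k$ of a proper $k$-coloring of $G$, and put $D=\bigcup_{i=1}^k(C_i\times A_i)$; then $|D|=\sum_{i=1}^k |C_i|\,\iota(H)=n(G)\iota(H)$, and the pairwise disjointness of the sets $L_{A_i}$ guarantees that the union of the undominated sets in the strips $C_i\times V(H)$ is independent in $G\,\Box\, H$, so $D$ is an isolating set.
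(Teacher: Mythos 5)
Your proposal is correct and matches the paper's reasoning exactly: the corollary follows from the first term of the minimum in Theorem~\ref{thm:uperBoundProduct} once one notes that $G$ being $\omega(I(H))$-colorable gives $\alpha_k(G)=n(G)$ for $k=\omega(I(H))$, so the $\gamma(H)$-term vanishes. The optional self-contained construction you sketch is just the special case $V(G')=V(G)$ of the theorem's own proof, so nothing further is needed.
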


An alternative upper bound can be obtained by using the vertex cover numbers of both factors.

\begin{theorem}\label{thm:BetaBound}
If $G$ and $H$ are graphs having no isolated vertices, then
$$\iota(G\,\Box\, H)\le \beta(G)\beta(H).$$ Moreover, the bound is sharp.
\end{theorem}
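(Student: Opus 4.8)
The plan is to take $S=B_G\times B_H$, where $B_G$ is a $\beta(G)$-set and $B_H$ is a $\beta(H)$-set, and to prove that $S$ is an isolating set of $G\,\Box\, H$; this immediately gives $\iota(G\,\Box\, H)\le |S|=\beta(G)\beta(H)$. The one structural fact I would isolate first is that in a graph without isolated vertices every vertex cover is also a dominating set: if $v$ lies outside a vertex cover $B$, then $v$ has an incident edge (no isolated vertices), and that edge must be covered, forcing a neighbour of $v$ into $B$. Hence $B_G$ dominates $G$ and $B_H$ dominates $H$.

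Next I would verify that $(G\,\Box\, H)-N[S]$ is edgeless by showing that every edge of $G\,\Box\, H$ has an endpoint in $N[S]$. There are two types of edges. If the edge is of the form $(g,h_1)(g,h_2)$ with $h_1h_2\in E(H)$, then since $B_H$ is a vertex cover at least one of $h_1,h_2$ lies in $B_H$, say $h_1\in B_H$; since $B_G$ dominates $G$ there is $g'\in B_G$ with $g'\in N_G[g]$, and then $(g',h_1)\in S$ belongs to $N\big[(g,h_1)\big]$ in $G\,\Box\, H$. The edges of the form $(g_1,h)(g_2,h)$ with $g_1g_2\in E(G)$ are handled symmetrically, using that $B_G$ is a vertex cover and $B_H$ dominates $H$. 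Thus $S$ is isolating and the bound follows.

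For sharpness I would exhibit an infinite family attaining equality. The double star $K_{1,n}\,\Box\, K_{1,m}$ works for all $n,m\ge 1$: each factor has vertex cover number $1$, so the bound gives $1$, while dominating the single vertex whose two coordinates are the centres of the stars leaves exactly the vertices $(\ell,\ell')$ with $\ell,\ell'$ leaves, which form an independent set; hence $\iota(K_{1,n}\,\Box\, K_{1,m})=1=\beta(K_{1,n})\beta(K_{1,m})$. (Already $C_4=K_2\,\Box\, K_2$ gives a single clean example.) I do not expect a genuine obstacle here; the only point that needs care is that the hypothesis of no isolated vertices is used exactly once, and precisely to promote the vertex covers of the factors to dominating sets, which is what makes the two-case edge check go through. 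It is also worth noting that a vertex cover (rather than merely a dominating set) of each factor is essential, since the argument relies on each edge of a factor having an endpoint in the chosen set.
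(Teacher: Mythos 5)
Your proof is correct and follows essentially the same route as the paper: both take the product of minimum vertex covers of the factors (the paper phrases them as complements of maximum independent sets), use the no-isolated-vertices hypothesis exactly once to promote these covers to dominating sets, and conclude that the undominated vertices lie in the product of the two independent sets, which is independent in $G\,\Box\, H$ --- your edge-by-edge check is just a rephrasing of that observation. Your sharpness family $K_{1,n}\,\Box\, K_{1,m}$ is the same one used in the paper (which additionally records $P_5\,\Box\, P_5$).
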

\begin{proof}
    Let $A_G$ be an $\alpha(G)$-set and $A_H$ an $\alpha(H)$-set. Denote $D_G=V(G)-A_G$ and $D_H=V(H)-A_H$. Since $G$ has no isolated vertices, every vertex in $A_G$ has at least one neighbor in $D_G$, which implies that $D_G$ is a dominating set of $G$, and similarly, we derive that $D_H$ is a dominating set of $H$. Now consider the set $D=D_G \times D_H$. Then $D$ dominates all vertices in $V(G  \,\Box\, H)-(A_G \times A_H)$. Since $A_G \times A_H$ is an independent set of $G \,\Box\, H$, the set $D$ is an isolating set of $G \,\Box\, H$. Hence $\io(G \,\Box\, H)\leq (n(G)-\alpha(G))(n(H)-\alpha(H))=\beta(G)\beta(H)$. 
    For the sharpness, $\io(P_5 \,\Box\, P_5)=4=\beta(P_5)\beta(P_5)$. There is also an infinite family that achieves the bound, for $m,n\geq 2$ we get, $\io(K_{1,n} \,\Box\, K_{1,m})=1=\beta(K_{1,n})\beta(K_{1,m})$.
\end{proof}

Note that bounds from Theorem~\ref{thm:uperBoundProduct} and Theorem~\ref{thm:BetaBound} are not comparable. 
Since for $n\ge 3$, $I(K_{1,n})=K_{1,n}$, we get $\omega(I(K_{1,n}))=2$. 
Clearly, $\alpha_2(K_{1,n})=n+1$. 
For $3 \leq n \leq m$ and graphs $G=K_{1,n}$ and $H=K_{1,m}$ it holds that $$\min\{\alpha_2(G)\iota(H)+(n(G)-\alpha_2(G))\gamma(H),\alpha_2(H)\iota(G)+(n(H)-\alpha_2(H))\gamma(G)\}=n+1$$ and $\beta(G)\beta(H)=1$ so the bound from Theorem~\ref{thm:BetaBound} is better. On the other hand, for $n\ge 3$ we have $I(K_n)=K_n$, and so $\omega(I(K_n))=n$. Clearly, if $m\ge n$, we have $\alpha_m(K_n)=n=\alpha_n(K_m)$.  For $3 \leq n\leq m$ and graphs $G=K_n$ and $H=K_m$, it holds 
$$\min\{\alpha_m(G)\iota(H)+(n(G)-\alpha_m(G))\gamma(H),\alpha_n(H)\iota(G)+(n(H)-\alpha_n(H))\gamma(G)\}=n$$ and $\beta(K_n)\beta(K_m)=(n-1)(m-1)$, which implies that the bound from Theorem~\ref{thm:uperBoundProduct} is better.

It is generally harder to obtain some good lower bounds on the isolation number of a graph. The following result provides a lower bound on the isolation number of the Cartesian product of two graphs.

\begin{theorem}
If $G$ and $H$ are graphs, then
$$\iota(G\,\Box\, H)\ge \max\{\rho_2(G)\iota(H),\rho_2(H)\iota(G)\}.$$
\end{theorem}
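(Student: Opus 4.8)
By the symmetry of the Cartesian product it suffices to prove the inequality $\iota(G\,\Box\, H)\ge \rho_2(G)\,\iota(H)$; the bound $\iota(G\,\Box\, H)\ge \rho_2(H)\,\iota(G)$ then follows by exchanging the roles of the two factors. The plan is to fix an arbitrary isolating set $D$ of $G\,\Box\, H$ and a $\rho_2(G)$-set $\{g_1,\dots,g_p\}$ of $G$, where $p=\rho_2(G)$, so that the closed neighborhoods $N_G[g_1],\dots,N_G[g_p]$ are pairwise disjoint. For each $i\in[p]$ consider the ``slab'' $S_i:=N_G[g_i]\times V(H)$; these slabs are pairwise disjoint subsets of $V(G\,\Box\, H)$. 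I will show that $|D\cap S_i|\ge \iota(H)$ for every $i$, and then summing over $i$ yields $|D|\ge \sum_{i=1}^{p}|D\cap S_i|\ge p\,\iota(H)=\rho_2(G)\,\iota(H)$, which suffices since $D$ was an arbitrary isolating set.

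Fix $i$, put $D_i:=D\cap S_i$, and let $D_i':=p_H(D_i)\subseteq V(H)$, so that $|D_i'|\le |D_i|$. The first key point is a ``locality'' observation: every neighbor of a vertex $(g_i,h)$ of the fiber $^{g_i}\! H$ in $G\,\Box\, H$, as well as $(g_i,h)$ itself, has first coordinate in $N_G[g_i]$, hence lies in $S_i$. Consequently $(g_i,h)$ is dominated by $D$ if and only if it is dominated by $D_i$. The second point translates this into a statement about $H$: I claim $D_i'$ is an isolating set of $H$. Suppose not; then $H$ contains an edge $h_1h_2$ with $h_1,h_2\notin N_H[D_i']$. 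The vertices $(g_i,h_1)$ and $(g_i,h_2)$ are adjacent in $G\,\Box\, H$, and I will check that neither is dominated by $D_i$ (equivalently, by $D$). Indeed, any vertex of $D_i$ dominating $(g_i,h_j)$ either equals $(g_i,h_j)$, or is of the form $(g',h_j)$ with $g'\in N_G(g_i)$, or is of the form $(g_i,h')$ with $h'\in N_H(h_j)$; in the first two cases $h_j\in p_H(D_i)=D_i'$, and in the third case $h'\in D_i'$ and hence $h_j\in N_H[D_i']$ — each contradicting $h_j\notin N_H[D_i']$. Thus $(g_i,h_1)$ and $(g_i,h_2)$ form an edge of $G\,\Box\, H$ neither endpoint of which is dominated by $D$, contradicting that $D$ is an isolating set. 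Therefore $D_i'$ is an isolating set of $H$, and $|D_i|\ge |D_i'|\ge \iota(H)$, as needed.

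I expect the only delicate step to be the claim that $D_i'$ is an isolating set of $H$, and within it, making sure the case analysis on how a vertex of $D_i$ could dominate $(g_i,h_j)$ is genuinely exhaustive (vertex in $D_i$ with second coordinate $h_j$, whether in $^{g_i}\! H$ or in an adjacent $G$-fiber, versus a vertex of $^{g_i}\! H$ in $D_i$ reaching $h_j$ along an $H$-edge) and that each case really places $h_j$ into $N_H[D_i']$. Everything else — the locality observation and the final summation — is routine bookkeeping based on the pairwise disjointness of the closed neighborhoods $N_G[g_i]$ coming from the $2$-packing.
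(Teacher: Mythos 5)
Your proof is correct and is essentially the same argument as the paper's: the paper merely runs it contrapositively, assuming $|D|<\rho_2(G)\iota(H)$ and using pigeonhole over the disjoint slabs $N_G[x]\times V(H)$ to find one meeting $D$ in fewer than $\iota(H)$ vertices, whereas you count slab by slab directly. The core step — projecting $D\cap\bigl(N_G[g_i]\times V(H)\bigr)$ to $H$ and lifting an uncovered edge of $H$ to an uncovered edge in the fiber $^{g_i}\!H$ — is identical, with your case analysis just spelling out what the paper asserts tersely.
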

\begin{proof}
By symmetry, it suffices to prove that $\iota(G\,\Box\, H)\ge \rho_2(G)\iota(H)$.  Suppose to the contrary that $\iota(G\,\Box\, H)< \rho_2(G)\iota(H)$. Let $D$ be an $\iota(G\,\Box\, H)$-set and let $A$ be a $\rho_2(G)$-set. Thus, there exists a vertex $x\in A$ such that $$|(N_G[x]\times V(H))\cap D|<\iota(H).$$
Let $D'=p_H\big((N_G[x]\times V(H))\cap D\big)$, and note that $|D'|<\iota(H)$. Therefore, there exist two vertices $h,h'\in V(H)-N[D']$ that are adjacent in $H$. Hence $(x,h)(x,h')\in E(G\,\Box\, H)$, and no vertex in $V(G\,\Box\, H)\cap D$ dominates $(x,h)$ or $(x,h')$. Thus, $D$ is not an isolating set in $G\,\Box\, H$, which is a contradiction.
\end{proof}

Given a graph $G$, the Cartesian product $G\,\Box\, K_2$ is the {\em prism} of $G$. Next, we present a lower and an upper bound on the isolation number of an arbitrary prism. Moreover, when $G$ is bipartite, the lower bound is the exact value of the isolation number of the prism of $G$.

\begin{theorem}
    \label{thm:prism}
If $G$ is a graph, then $$\gamma(G)\le \iota(G\,\Box\, K_2)\le \gamma(G\,\Box\, K_2).$$ 
In addition, if $G$ is bipartite, then $\iota(G\,\Box\, K_2)=\gamma(G)$.
\end{theorem}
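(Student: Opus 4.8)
The upper bound $\iota(G\cp K_2)\le\gamma(G\cp K_2)$ holds trivially since every dominating set is an isolating set. For the lower bound $\gamma(G)\le\iota(G\cp K_2)$, let $D$ be an $\iota(G\cp K_2)$-set, and write the two $G$-fibers as $G^0$ and $G^1$, identifying each with $G$ via the projection $p_G$. I would set $D_0=p_G(D\cap V(G^0))$ and $D_1=p_G(D\cap V(G^1))$, so $|D_0|+|D_1|=|D|$ (as a multiset count; the aim is to show one of $D_0\cup D_1$-type sets dominates $G$). The key observation is: if a vertex $(g,0)$ is \emph{not} dominated by $D$ in $G\cp K_2$, then in particular $g$ is not dominated by $D_0$ in $G$ \emph{and} $g\notin D_1$; moreover its unique neighbor $(g,1)$ in the other fiber must be an isolated vertex of $(G\cp K_2)-N[D]$, which forces every $G$-neighbor of $g$ together with $g$ itself (in fiber $1$) to be dominated by $D$. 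The plan is to show that $D_0\cup D_1$ is a dominating set of $G$: any $g\in V(G)$ not dominated by $D_0$ has $(g,0)\notin N[D]$ (else $(g,0)$ is dominated from fiber $1$, i.e.\ $g\in D_1$, done) — wait, more carefully, if $(g,0)\in N[D]$ via a vertical edge then $g\in D_1\subseteq D_0\cup D_1$; if via a horizontal edge then $g\in N_G[D_0]$. So the only problematic $g$ are those with $(g,0)\notin N[D]$, and for those the isolation condition applied to the edge $(g,0)(g,1)$ forces $(g,1)\in N[D]$, hence $g\in N_G[D_1]\cup D_1$. Symmetrically handling fiber $1$, we conclude $V(G)=N_G[D_0\cup D_1]$, so $\gamma(G)\le|D_0\cup D_1|\le|D|=\iota(G\cp K_2)$.

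For the equality when $G$ is bipartite, by the lower bound it suffices to exhibit an isolating set of $G\cp K_2$ of size $\gamma(G)$. Let $V(G)=X\cup Y$ be the bipartition and let $D$ be a $\gamma(G)$-set. Put $D_X=D\cap X$ and $D_Y=D\cap Y$, and define $S=(D_X\times\{0\})\cup(D_Y\times\{1\})\subseteq V(G\cp K_2)$, so $|S|=|D|=\gamma(G)$. I claim $S$ is isolating. Consider any vertex $(g,i)\notin N[S]$; I want to show $(g,i)$ is isolated in $(G\cp K_2)-N[S]$, i.e.\ none of its neighbors is also outside $N[S]$. Say $g\in X$ (the case $g\in Y$ is symmetric). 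Since $D$ dominates $G$, $g\in N_G[D]$; but $g\in X$ and $N_G[D]\cap X = D_X\cup N_G(D_Y)$, so either $g\in D_X$ or $g$ has a neighbor in $D_Y$. In the first case $(g,0)\in S$, contradicting $(g,0)\notin N[S]$, so we must have $i=1$ and $(g,1)$ has a neighbor $(g',1)$ with $g'\in D_Y$ — hence $(g,1)\in N[S]$, contradiction. This shows that any $g\in X$ with $(g,i)\notin N[S]$ forces $g$ to have a neighbor in $D_Y$; then every horizontal neighbor $(g',i)$ of $(g,i)$ has $g'\in Y$, and $g'\in N_G[D_X]$ (since $g'\in N_G(g)$ and... ) — here is where I would use that the bipartition aligns the levels: the vertical neighbor of $(g,1)$ is $(g,0)$, and $g\in N_G(D_Y)$ does not place $(g,0)$ in $N[S]$ directly, so I must argue the vertical neighbor separately, checking that $(g,0)\notin N[S]$ cannot coexist with $(g,1)\notin N[S]$ because at least one of $g$'s dominators in $D$ lies in the "correct" part for one of the two levels.

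The main obstacle is precisely this last bookkeeping: verifying that the set $S$ built by "pushing $X$-dominators to level $0$ and $Y$-dominators to level $1$" genuinely leaves no edge uncovered in $(G\cp K_2)-N[S]$, i.e.\ that for each $g$ the two copies $(g,0),(g,1)$ cannot both be undominated. The clean way is: if $g$ is dominated in $G$ by some $d\in D$, then $d\in X$ or $d\in Y$; if $d=g$ this resolves one level and the \emph{other} level's copy has its vertical partner in $S$; if $d\neq g$ then (since $G$ is bipartite) $d$ lies in the part opposite to $g$, so $d$ has been pushed to the level matching $g$'s... and one checks the edge $(g,0)(g,1)$ has an endpoint in $N[S]$. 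I expect this to go through with a short case analysis on the parity of $g$ and whether $g\in D$, and the bipartiteness is exactly what makes the two pushes compatible; without it the construction can fail, which is consistent with the general lower bound not being tight for non-bipartite $G$ (cf.\ $\iota(K_3\cp K_2)=2>1=\gamma(K_3)$).
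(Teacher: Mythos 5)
Your lower bound and the trivial upper bound are fine and coincide with the paper's argument: the projection $p_G(D)$ of an $\iota(G\,\Box\,K_2)$-set $D$ must dominate $G$, since a vertex $g$ with both copies $(g,0),(g,1)$ outside $N[D]$ would leave the vertical edge $(g,0)(g,1)$ in the remainder. (Your phrase that $(g,1)$ ``must be an isolated vertex of $(G\,\Box\,K_2)-N[D]$'' is a misstatement --- it must lie \emph{in} $N[D]$ --- but you correct this immediately, so the bound stands.) Your construction for the bipartite case, $S=(D_X\times\{0\})\cup(D_Y\times\{1\})$, is exactly the paper's.

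The genuine gap is that you never finish verifying that $S$ is isolating; you stop at the ``bookkeeping'' and only express the expectation that it works. The step where you stall also contains a false intermediate claim: for an undominated $(g,0)$ with $g\in X$, you assert that a horizontal neighbor $g'\in Y$ satisfies $g'\in N_G[D_X]$, which need not hold ($g'$ may lie in $D_Y$ and have no neighbor in $D_X$); in that case $(g',0)$ is nevertheless dominated, but \emph{vertically}, by $(g',1)\in S$. The clean way to close the gap (and the paper's route) is to drop the vertex-by-vertex ``is it isolated'' analysis and instead prove the single statement: every vertex of $Y\times\{0\}$ and of $X\times\{1\}$ lies in $N[S]$. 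Indeed, for $v\in Y$ either $v\in D_Y$, so $(v,1)\in S$ dominates $(v,0)$ through the vertical edge, or, since $D$ dominates $G$ and all neighbors of $v$ lie in $X$, $v$ has a neighbor in $D_X$, so $(v,0)$ is dominated horizontally; symmetrically for $u\in X$ at level $1$. Consequently $V(G\,\Box\,K_2)-N[S]\subseteq (X\times\{0\})\cup(Y\times\{1\})$, and this set is independent: each level contributes an independent set of $G$, and no vertical edge joins the two parts because $X\cap Y=\emptyset$. This replaces the unresolved case analysis entirely; in particular there is no need to show that $(g,0)$ and $(g,1)$ cannot both be undominated --- for $g\in D$ both are dominated, and for $g\notin D$ exactly one of them might be undominated, which is harmless since the undominated set is independent.
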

\begin{proof}
Set $V(K_2)=[2]$. Let $D$ be an $\iota(G\,\Box\, K_2)$-set and $D'=p_G(D)$ be its projection to $G$. Note that $D'$ is a dominating set of $G$ for otherwise, if there exists $x\notin N_G[D']$, then $(x,1)$ and $(x,2)$ are not dominated by $D$ in $G\,\Box\, K_2$, a contradiction with $D$ being an isolating set. Thus, $\iota(G\,\Box\, K_2)=|D|\ge |D'|\ge \gamma(G)$. This settles the first inequality, while the second one is trivial.

Now, let $G$ be a bipartite graph with the bipartition $V(G)=A_1\cup A_2$ where $A_1$ and $A_2$ are independent sets. Let $D$ be a $\gamma(G)$-set, and let $D_i=D\cap A_i$ for $i\in [2]$. We claim that the set $A=(D_1\times \{1\})\cup (D_2\times\{2\})$ is an isolating set of $G\,\Box\, K_2$. 
Let $v \in A_2$.  If $v \in D_2$, then $(v,1)$ is dominated by $D_2 \times \{2\}$.  If $v \in A_2-D_2$, then $(v,1)$ is dominated by $D_1 \times \{1\}$ since $D$ is a dominating set of $G$.  Hence, every vertex in $A_2 \times \{1\}$ is dominated by $A$.  Similarly, $A$ dominates $A_1 \times \{2\}$.  It follows that
$V(G\, \Box\,H) -N[A] \subseteq (A_1 \times \{1\}) \cup (A_2 \times \{2\})$.  Since  
$(A_1 \times \{1\}) \cup (A_2 \times \{2\})$ is independent, we conclude that $A$ is an isolating set. Since $|A|=|D|=\gamma(G)$, we infer by using also the first inequality of the theorem: 
$$\gamma(G)\le \iota(G\,\Box\, K_2)\le |A|=|D|=\gamma(G).$$
The proof is complete.
\end{proof}

For the class of hypercubes $Q_n$, Theorem~\ref{thm:prism} implies that $$\iota(Q_{n+1})=\gamma(Q_n)$$
holds for any $n\in \mathbb{N}$. The problem of determining the exact values of the domination numbers of hypercubes is a long-standing open problem; see a recent paper~\cite{bkr-2024} and the references therein. For this reason, obtaining exact values of $\iota(Q_n)$ for all $n$ will also be hard.

\section{Lexicographic products and graphs with $\iota=\gamma$}
\label{sec:lex}

When starting the study of the isolation number, one may wonder if the natural upper bound $\iota(G)\leq\gamma(G)=k$ can be attained for all $k\in \mathbb{N}$.  Clearly, in many graphs $\gamma(G)$ can be arbitrarily larger than $\iota(G)$, just take subdivisions of stars, where $\iota(S(K_{1,n}))=1<n=\gamma(S(K_{1,n}))$.
In this section, we present a large family of graphs in which both invariants coincide. For this purpose we will use yet another graph product---the lexicographic product.  

We start by recalling the formula about the domination number of the lexicographic product of two graphs.

\begin{theorem}\hskip-0.5pt {\rm \cite{spt}} 
\label{thm:gammalex}
If $G$ is a nontrivial connected graph and $H$ is a connected graph with $\gamma(H)\ge 2$, then $\gamma(G \circ H)=\gamma_t(G)$.
\end{theorem}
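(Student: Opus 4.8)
The plan is to establish the two inequalities $\gamma(G\circ H)\le\gamma_t(G)$ and $\gamma(G\circ H)\ge\gamma_t(G)$ separately. For the upper bound I would take a $\gamma_t(G)$-set $S$ (which exists since $G$, being nontrivial and connected, has no isolated vertex), fix an arbitrary vertex $h_0\in V(H)$, and claim that $S\times\{h_0\}$ is a dominating set of $G\circ H$. Indeed, for any $(g,h)\in V(G\circ H)$, the totality of $S$ yields a vertex $s\in S\cap N_G(g)$, and then $(s,h_0)$ is adjacent to $(g,h)$ in the lexicographic product because $sg\in E(G)$. Hence $\gamma(G\circ H)\le|S\times\{h_0\}|=\gamma_t(G)$.

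For the lower bound, let $D$ be a $\gamma(G\circ H)$-set. For each $g\in V(G)$ write $D_g=\{h\in V(H):(g,h)\in D\}$ and $d_g=|D_g|$, so that $|D|=\sum_{g\in V(G)}d_g$, and set $S=p_G(D)=\{g:d_g\ge1\}$. A one-line neighborhood check shows $S$ dominates $G$ (a vertex $(g,h)$ lies in $N[D]$ only if $D$ meets $\{g\}\times N_H[D_g]$ or meets a fiber over a $G$-neighbor of $g$). Let $A$ be the set of vertices of $G$ having no neighbor in $S$; since $S$ dominates $G$, we have $A\subseteq S$. The crucial step is: if $g\in A$, then no vertex of $D$ lies in a fiber over a neighbor of $g$, so every $(g,h)$ must be dominated within the fiber $^{g}H$, i.e.\ $N_H[D_g]=V(H)$; thus $D_g$ is a dominating set of $H$ and $d_g\ge\gamma(H)\ge2$. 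Now choose for each $g\in A$ a neighbor $f(g)\in N_G(g)$ (possible since $G$ is connected and nontrivial) and put $S'=S\cup\{f(g):g\in A\}$. Then $S'$ is a total dominating set of $G$: every vertex outside $A$ already has a neighbor in $S\subseteq S'$, and every $g\in A$ has the neighbor $f(g)\in S'$. Finally, counting gives
\[
|D|=\sum_{g\in V(G)}d_g\;\ge\;\sum_{g\in S}d_g\;\ge\;|S\setminus A|+2|A|\;=\;|S|+|A|\;\ge\;|S'|\;\ge\;\gamma_t(G),
\]
so that $\gamma(G\circ H)=|D|\ge\gamma_t(G)$, which together with the upper bound finishes the proof.

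The main obstacle I expect is the lower bound, and within it the observation that a ``bad'' vertex $g\in A$ forces $d_g\ge2$: this is precisely where the hypothesis $\gamma(H)\ge2$ is indispensable, since if $H$ had a universal (or otherwise small dominating) vertex one could dominate an entire fiber from inside it cheaply and the counting inequality above would fail. I would also be careful to record where the assumptions on $G$ enter, namely that connectivity and nontriviality guarantee both that $\gamma_t(G)$ is well defined and that each $g\in A$ genuinely has a neighbor $f(g)$ to add to $S'$; note that only $\gamma(H)\ge2$ (not connectivity of $H$) is used on the $H$-side. The remaining verifications---that $S\times\{h_0\}$ dominates, that $S$ dominates $G$, and that $S'$ totally dominates $G$---are routine neighborhood computations in $G\circ H$ and I would dispatch them briefly.
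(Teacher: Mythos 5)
Your proof is correct. Note that the paper does not prove this statement at all --- it is quoted with a citation to the reference \cite{spt} on Roman domination in lexicographic products --- so there is no in-paper argument to compare against; your write-up is a valid self-contained proof. Both directions check out: the upper bound via $S\times\{h_0\}$ for a $\gamma_t(G)$-set $S$ is a routine verification, and in the lower bound the key step is sound --- if $g\in A$ has no neighbor in $p_G(D)$, then every vertex of the fiber $^{g}H$ must be dominated from within that fiber, forcing $d_g\ge\gamma(H)\ge 2$ --- and the count $|D|\ge|S\setminus A|+2|A|=|S|+|A|\ge|S'|\ge\gamma_t(G)$ is exactly right, since $S'=S\cup\{f(g):g\in A\}$ adds at most $|A|$ vertices and is total dominating by the definition of $A$. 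Your closing observation is also accurate: connectivity of $H$ is never used (only $\gamma(H)\ge 2$), so your argument is in fact slightly more general than the statement as cited, while nontriviality and connectivity of $G$ are needed precisely where you say, namely so that $\gamma_t(G)$ is defined and each $g\in A$ has a neighbor $f(g)$.
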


We now show that the isolation number equals the domination number in all lexicographic products of two nontrivial connected graphs where the isolation number of the second factor is at least $2$.

\begin{theorem} \label{thm:lexico}
If $G$ and $H$ are nontrivial connected graphs such that $\iota(H)\geq 2$, then 
\[\iota(G \circ H)=\gamma(G \circ H)=\gamma_t(G)\,.\]
\end{theorem}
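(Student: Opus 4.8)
The plan is to establish the chain $\gamma_t(G) = \gamma(G\circ H) \ge \iota(G\circ H) \ge \gamma_t(G)$, where the first equality is Theorem~\ref{thm:gammalex} (applicable since $\iota(H)\ge 2$ forces $\gamma(H)\ge 2$), and the middle inequality is the trivial bound $\gamma\ge\iota$. Thus the only real work is the lower bound $\iota(G\circ H)\ge\gamma_t(G)$. For this I would take an $\iota(G\circ H)$-set $D$ and aim to produce a total dominating set of $G$ of size at most $|D|$, in the natural way: consider the projection $D'=p_G(D)$ and try to show it is a total dominating set of $G$. Since projection can only shrink cardinality, $|D'|\le|D|$, and $\gamma_t(G)\le|D'|$ would finish the proof — so it suffices to verify that every vertex of $G$ has a neighbor in $D'$.

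The key step is showing $D'$ is a total dominating set. First I would argue $D'$ is at least a dominating set of $G$: if some $g\in V(G)$ satisfied $g\notin N_G[D']$, then in $G\circ H$ no vertex of the fiber ${}^{g}\!H$ lies in $D$ and, since every neighbor of any $(g,h)$ in $G\circ H$ outside ${}^{g}\!H$ projects into $N_G(g)$, no such neighbor lies in $D$ either; hence the only possible dominators of vertices of ${}^{g}\!H$ would lie within $D\cap{}^{g}\!H=\emptyset$, so $D$ dominates nothing in ${}^{g}\!H$, and because $H$ has an edge (it is nontrivial connected), ${}^{g}\!H\setminus N[D]$ contains an edge, contradicting that $D$ is isolating. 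Next, to upgrade to \emph{total} domination, suppose some $g\in V(G)$ has $N_G(g)\cap D'=\emptyset$. Then no vertex of $G\circ H$ outside ${}^{g}\!H$ that is adjacent to a vertex of ${}^{g}\!H$ can lie in $D$; so every vertex of ${}^{g}\!H$ that is dominated by $D$ must be dominated from within ${}^{g}\!H$, i.e. by $D\cap{}^{g}\!H$. Writing $D_g=p_H(D\cap{}^{g}\!H)$, the isolating property of $D$ restricted to the fiber ${}^{g}\!H\cong H$ tells us that $H - N_H[D_g]$ is edgeless, so $D_g$ is an isolating set of $H$, whence $|D\cap{}^{g}\!H|=|D_g|\ge\iota(H)\ge 2$. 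In particular $D_g\ne\emptyset$, so $g\in D'$.

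Now comes the combinatorial punchline. Let $S=\{g\in V(G): N_G(g)\cap D'=\emptyset\}$ be the set of "troublesome" vertices. Each $g\in S$ lies in $D'$ (just shown) and contributes at least $\iota(H)\ge 2$ vertices of $D$ inside its own fiber; moreover these fiber-contributions are disjoint across different $g$'s. The idea is to build a total dominating set of $G$ of size at most $|D|$ as follows: start from $D'$, and for each $g\in S$ delete $g$ is not an option (we must dominate $g$'s neighbors too, but actually $S\subseteq D'$ and $g$ has \emph{no} neighbor in $D'$). Instead, for each $g\in S$ pick an arbitrary neighbor $f(g)\in N_G(g)$ and add it. Since for $g\in S$ the fiber ${}^{g}\!H$ already consumes $\ge 2$ vertices of $D$ while in $D'$ it is counted only once, we can afford this: more precisely, $|D| \ge |D'| + \sum_{g\in S}(|D\cap{}^{g}\!H|-1) \ge |D'| + |S|$, so $|D' \cup f(S)| \le |D'| + |S| \le |D|$. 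The set $D'\cup f(S)$ is a total dominating set of $G$: every vertex $g$ outside $S$ already has a neighbor in $D'$ by definition of $S$, and every vertex $g\in S$ has the neighbor $f(g)\in f(S)$; and of course $D'\cup f(S)$ dominates $V(G)$ since $D'$ already does. Hence $\gamma_t(G)\le |D'\cup f(S)|\le|D|=\iota(G\circ H)$, completing the chain.

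The main obstacle is the total-domination upgrade — ensuring the "$S$" vertices don't cost too much. The clean resolution is exactly the observation that a fiber over an $S$-vertex is forced to absorb at least $\iota(H)\ge 2$ vertices of $D$ (this is where the hypothesis $\iota(H)\ge 2$, rather than merely $H$ nontrivial, is essential), so the extra neighbor we add to repair total domination is paid for by the "wasted" second vertex in that fiber. One should double-check the edge cases: that $D\cap{}^{g}\!H$ can be empty only when $g\notin S$ (handled, since an empty $D_g$ cannot be isolating in a graph with an edge unless... — in fact if $g\notin S$ but $D\cap{}^{g}\!H=\emptyset$, vertices of ${}^{g}\!H$ are dominated from outside, which is fine), and that the disjointness of the fibers ${}^{g}\!H$ over distinct $g$ makes the counting $|D|\ge|D'|+|S|$ legitimate. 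These are routine once the structure above is in place.
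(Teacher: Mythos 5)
Your proof is correct, and it reaches the bound $\gamma_t(G)\le \iota(G\circ H)$ by a different mechanism than the paper. The paper first proves a normalization claim: among $\iota(G\circ H)$-sets choose one minimizing the number of $H$-fibers containing more than one vertex, and an exchange argument (either delete a duplicate vertex, or move it into an empty neighboring fiber while preserving $N[A]\subseteq N[A']$) shows one may assume every fiber contains at most one vertex; the projection to $G$ then has the same cardinality and is shown directly to be a total dominating set, using $\iota(H)\ge 2$ exactly where you do, namely for a projected vertex with no projected neighbor. You skip the normalization entirely: you work with an arbitrary $\iota(G\circ H)$-set $D$, observe that each vertex $g$ of the set $S$ of vertices with no neighbor in $p_G(D)$ forces its fiber to contain an isolating set of $H$, hence at least $\iota(H)\ge 2$ vertices of $D$, and you spend that surplus (via $|D|\ge |p_G(D)|+|S|$) to add one neighbor $f(g)$ per $g\in S$ and repair total domination. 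Your charging argument buys a shorter, more robust proof with no exchange lemma to verify (the paper's swap requires checking that domination is preserved and that the fiber count strictly drops), while the paper's route yields a structural by-product of independent interest: the existence of a minimum isolating set of $G\circ H$ meeting each $H$-fiber in at most one vertex. Both proofs use the hypotheses in the same places ($H$ nontrivial to produce an undominated edge in an untouched fiber, $\iota(H)\ge 2$ for the fibers over $S$, and $G$ nontrivial connected so $f(g)$ exists), so the two arguments are equally sharp; yours is a fully valid substitute.
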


\begin{proof}
By definition, $\iota(G \circ H)\leq \gamma(G \circ H)$ and by Theorem~\ref{thm:gammalex}, $\gamma(G \circ H)=\gamma_t(G)$.  Thus, it remains to show that 
$\gamma_t(G) \leq \iota(G \circ H)$.

We claim that there exists an $\iota(G \circ H)$-set such that every $H$-fiber contains at most one vertex of that set. Let $A$ be an $\iota(G \circ H)$-set with the least number of $H$-fibers that contain more than one vertex in $A$.  Suppose that for some $x \in V(G)$, the $H$-fiber $^{x}\! H$ contains at least two vertices, say $(x,h)$ and $(x,k)$, from $A$.  If there exists $w \in N_G(x)$ such that $|V(^{w}\! H) \cap A| \ge 1$, then $A-\{(x,h)\}$ is a smaller isolating set of $G\circ H$, which is a contradiction.  Thus, if $y$ is an arbitrary neighbor of $x$ in $G$, then $V(^{y}\! H) \cap A=\emptyset$. Now, let 
$A'=(A-\{(x,k)\}) \cup \{(y,k)\}$. Note that $N[A] \subseteq N[A']$, and so $A'$ is also an $\iota(G\circ H)$-set, yet there are fewer $H$-fibers that contain more than one vertex in $A'$ than there were such $H$-fibers with respect to $A$. This is a contradiction, which proves the claim.  

Thus, we may assume that no $H$-fiber contains more than one vertex of an $\iota(G\circ H)$-set $A$.  Let $D=p_G(A)$.  We claim that $D$ is a total dominating set of $G$. First, suppose that there exists $g \in V(G)-N[D]$.  Since $H$ contains at least one edge, say $h_1h_2$, the edge $(g,h_1)(g,h_2)$ is in the graph $G \circ H -N[A]$, which is a contradiction. Hence $D$ is a dominating set of $G$.  Suppose now, that there exists a vertex $d \in D$ that has no neighbors in $D$, and let $A \cap V(^{d}\! H)=\{(d,h')\}$. Since $d$ has no neighbors in $D$,  $(d,h')$ is the only vertex in $A$ that dominates vertices in $^{d}\! H$.  Since $\iota(H) \ge 2$, it follows that $A$ is not an isolating set of $G \circ H$.  This is a contradiction, and we conclude that $D$ is a total dominating set of $G$.  Therefore, \\
$\gamma(G \circ H)=\gamma_t(G) \le |D|=|A|=\iota(G \circ H)\,.$
\end{proof}

By fixing a graph $H$ such that $\iota(H) \ge 2$ we can use Theorem~\ref{thm:lexico} to construct an infinite family of graphs that have equal domination and isolation numbers.  In fact, for each $k\ge 2$ we now present an infinite family $\{F_i\}$ such that $\iota(F_n)=\gamma(F_n)= k$ for every positive integer $n$.  Let $H_1=C_5$ and for each $i \ge 2$ let $H_i=H_{i-1} \vee H_{i-1}$, where $\vee$ stands for the join of two graphs (that is, $H_i$ is obtained from the disjoint union of two copies of $H_{i-1}$ by adding an edge between each vertex of one copy and each vertex in another copy of $H_{i-1}$). 
It is easy to see that 
$\iota(H_i)=2$, for every $i$.  Now let $G$ be a graph that has total domination number $k$.  For each $n$, let 
$F_n=G \circ H_n$.  By Theorem~\ref{thm:lexico}, $\iota(F_n)=\gamma(F_n)=k$, for every $n$.

\section{Sierpi\'{n}ski graphs}
\label{sec:Sierp}

Gravier et al.\ in~\cite{gkp-2011} introduced the $t$-th generalized Sierpi\'{n}ski graphs as follows. If $G$=(V,E) is an arbitrary graph (called the base graph of the generalized Sierpi\'{n}ski graph), then $S_G^t$ is the graph with vertex set $V^t$ and edge set defined as follows. Vertices $u=u_1\ldots u_t$ and $v=v_1\ldots v_t$ are adjacent in $S_G^t$ if and only if there exists $i \in [t]$ such that
\begin{enumerate}
    \item[(i)] $u_j = v_j$ if $j < i$;
    \item[(ii)] $u_i \neq  v_i$ and $u_iv_i \in E$; 
    \item[(iii)] $u_j = v_i$ and $v_j = u_i$ if $j > i$.
\end{enumerate}

If $uv \in E(S_G^t)$, then there is an edge $xy \in E(G)$ and a word $w$ (which can be the empty word) such that $u = wxyy\ldots y$ and $v = wyxx\ldots x$. Moreover, $S_G^t$ can be constructed
recursively from $G$ in the following way: $S_G^1 = G$ and, for $t \geq 2$, $S_G^t$ is obtained from $|V(G)|$ disjoint copies of $S_G^{t-1}$ by replacing each vertex of $G$ by a copy of $S_G^{t-1}$ and then by adding the letter $x$ at the beginning of each label of the vertices belonging to the copy of $S_G^{t-1}$ corresponding to $x$. Then for every edge $xy \in E(G)$, we add an edge between the vertex $xyy \ldots y$ and the vertex $yxx \ldots x$. Vertices of the form $xx \ldots x$ are called {\em extreme vertices} of $S_G^t$. For $i \in V(G)$, the subgraph of $S_G^t$ induced by the set $\{iw; w \in V(S_G^{t-1})\}$ is isomorphic to $S_G^{t-1}$ and is denoted by $iS_G^{t-1}$. Notice that for any graph $G$ of order $n$ and any integer $t \geq 2$, $S_G^t$ has $n$ extreme vertices and, if $x$ has degree $k$ in $G$, then the extreme vertex $x\ldots x$ of $S_G^t$ also has degree $k$. Moreover, the degree of the vertex $yxx\ldots x$ in $S_G^t$ is $k+1$, since $yx\ldots x$ has exactly one neighbor in $S_G^t$ that is outside $yS_G^{t-1}$.
The set of extreme vertices of $S_G^t$ will be denoted by $Ex(S_G^t)$. For a set $D \subseteq V(S_G^t)$, $iD$ denotes the vertices in $D \cap V(iS_G^{t-1})$.

To obtain the upper bound for the isolation number of generalized Sierpi\'{n}ski graphs we first need the following notations. With ${\cal{I}}(G)$ we denote the set of all isolating sets of $G$ of cardinality at most $\gamma(G)$, i.e. $${\cal{I}}(G)=\{D; D \textrm{ is isolating set and } |D| \leq \gamma(G)\}.$$ Moreover, let 
$$\xi(S_G^2)=\min{\{|D|:\, D\in {\cal{I}}(S_G^2), \textrm{ where }  ii,jj\in V(S_G^2)-N[D]\textrm{ implies }ij\notin E(G)\}}.$$ The set $D \subseteq V(S_G^2)$ of cardinality $\xi(S_G^2)$ with the property that no two extreme vertices $ii,jj$ with $ij\in E(G)$ belong to $V(S_G^2)-N[D]$ is called a $\xi(S_G^2)$-set.

\begin{theorem}\label{thm:upperBound}
    If $G$ is a graph and $t\geq 2$, then $\io(S_G^t) \leq \xi(S_G^2)\cdot n(G)^{t-2}$.
\end{theorem}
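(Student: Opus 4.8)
The natural route is induction on $t$ via the recursive description of $S_G^t$ as $n(G)$ pairwise disjoint copies $iS_G^{t-1}$ ($i\in V(G)$), together with the gluing edges joining $xyy\ldots y\in xS_G^{t-1}$ to $yxx\ldots x\in yS_G^{t-1}$ for each $xy\in E(G)$. A bare induction does not suffice, because controlling the gluing edges requires knowing which extreme vertices of a copy are left undominated. So the plan is to prove the following strengthening by induction on $t\ge 2$: \emph{there is an isolating set $D$ of $S_G^t$ with $|D|\le \xi(S_G^2)\,n(G)^{t-2}$ such that $U(D):=\{k\in V(G):\, k\ldots k\notin N[D]\}$ is an independent set of $G$.} The theorem is then immediate.

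For the base case $t=2$, a $\xi(S_G^2)$-set $D$ does the job: such a set exists (a $\gamma(S_G^2)$-set lies in ${\cal I}(S_G^2)$ and dominates everything, so $\xi(S_G^2)$ is well defined), it is isolating with $|D|=\xi(S_G^2)=\xi(S_G^2)\,n(G)^{0}$, and by the very definition of a $\xi(S_G^2)$-set no two members $ii,jj$ of $V(S_G^2)-N[D]$ satisfy $ij\in E(G)$, i.e.\ $U(D)$ is independent. For the inductive step, take such a set $D_{t-1}$ in $S_G^{t-1}$ and put $B=U(D_{t-1})$, which is independent in $G$. Build $D_t$ by placing in each copy $iS_G^{t-1}$ the image of $D_{t-1}$ under the canonical isomorphism $S_G^{t-1}\cong iS_G^{t-1}$; then $|D_t|=n(G)\,|D_{t-1}|\le \xi(S_G^2)\,n(G)^{t-2}$, so it remains to show that $D_t$ is isolating and that $U(D_t)$ is independent in $G$.

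Every vertex of $S_G^t$ not dominated by $D_t$ lies in some copy $iS_G^{t-1}$ and is in particular undominated within that copy; since $D_{t-1}$ is isolating, the undominated vertices of a single copy are pairwise non-adjacent, so there is no edge of $S_G^t$ between two undominated vertices lying inside a single copy. An edge of $S_G^t$ with endpoints in different copies is a gluing edge, joining the extreme vertex of $xS_G^{t-1}$ corresponding to $y$ to the extreme vertex of $yS_G^{t-1}$ corresponding to $x$, for some $xy\in E(G)$ (recall from the excerpt that $xyy\ldots y$ is exactly that extreme vertex of $xS_G^{t-1}$ and has a unique neighbor outside $xS_G^{t-1}$, namely $yxx\ldots x$). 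If both endpoints of such an edge were undominated by $D_t$, then in particular the extreme vertex of $xS_G^{t-1}$ corresponding to $y$ would be undominated inside $xS_G^{t-1}$, and the extreme vertex of $yS_G^{t-1}$ corresponding to $x$ would be undominated inside $yS_G^{t-1}$; since the same copy of $D_{t-1}$ sits in both, this forces $y\in B$ and $x\in B$, contradicting independence of $B$ because $xy\in E(G)$. Hence $D_t$ is isolating. Finally, the extreme vertex $k\ldots k$ of $S_G^t$ is the extreme vertex of $kS_G^{t-1}$ corresponding to $k$ and, being a pure extreme vertex, has no neighbor outside $kS_G^{t-1}$; so it is undominated by $D_t$ exactly when $k\ldots k\notin N_{S_G^{t-1}}[D_{t-1}]$, that is, exactly when $k\in B$. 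Thus $U(D_t)=B$ is independent in $G$, which completes the induction.

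The only genuine obstacle is conceptual: one has to realize that a minimum isolating set of a copy is not enough, isolate the correct inductive hypothesis (``isolating, plus the undominated extreme vertices form an independent set of $G$'', which is precisely the property encoded in the definition of $\xi(S_G^2)$), and observe that reusing the \emph{same} set $D_{t-1}$ in all $n(G)$ copies is exactly what allows the independence of $B$ to be applied to both ends of a gluing edge at once. Everything else is bookkeeping with the structure of $S_G^t$ recalled before the theorem.
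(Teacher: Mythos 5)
Your proof is correct and follows essentially the same route as the paper's: the same construction (placing a copy of the dimension-$(t-1)$ set, ultimately built from a $\xi(S_G^2)$-set, in every subgraph $iS_G^{t-1}$), the same inductive invariant (the undominated extreme vertices, viewed in $V(G)$, contain no two adjacent vertices), and the same case split between edges inside a copy and the gluing edges. The only cosmetic difference is that you phrase the invariant as independence of $U(D)$ in $G$ and track the bound as an inequality, while the paper states it as "no two adjacent extreme vertices undominated" and records the exact count $|D^t|=\xi(S_G^2)\,n(G)^{t-2}$.
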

\begin{proof}
    Let $G$ be a graph of order $n$ with $V(G)=[n]$ and let $D^2$ be a $\xi(S_G^2)$-set. For $t\geq 3$ we inductively define $D^t=\cup_{i=1}^{n} iD^{t-1}$, where $iD^{t-1} = \{iw:\, w \in D^{t-1}\}$ for any $i\in V(G)$.
    We will prove that $D^t$ is an isolating set of $S_G^t$ with the property that no two extreme vertices $i^t,j^t$ with $ij\in E(G)$ belong to $V(S_G^t)-N[D^t]$, assuming that $D^{t-1}$ satisfied such a property. Note that the basis of induction is fulfilled, since $D^2$ has such a property by the definition of the invariant $\xi(S_G^2)$.
 
     Note that $S_G^t$ is obtained from $n$ copies of $S_G^{t-1}$, each copy corresponding to a vertex in $V(G)$, by adding the edges between vertices $ij^{t-1}$ (that is, the extreme vertex $j^{t-1}$ of the copy of $S_G^{t-1}$ that corresponds to the vertex $i\in V(G)$) and $ji^{t-1}$ (which is the extreme vertex $i^{t-1}$ of the copy of $S_G^{t-1}$ that corresponds to the vertex $j\in V(G)$) if $ij\in E(G)$. Note that the only vertices in a copy of $S_G^{t-1}$ that have a neighbor in another copy of $S_G^{t-1}$ are the extreme vertices of that copy. Therefore, for $i\in V(G)$, in the subgraph $iS_G^{t-1}$ any two vertices that are not dominated by $iD^{t-1}$ are not adjacent, because $D^{t-1}$ is an isolating set in $S_G^{t-1}$ and $D^t\cap V(iS_G^{t-1})=iD^{t-1}$. Hence, it remains to check that two vertices that are not dominated by $D^t$ and belong to different copies of $S_G^{t-1}$ are also not adjacent. As noted above, $ij^{t-1}$ is adjacent to $ji^{t-1}$ if and only if $ij\in E(G)$. Assume that $ij\in E(G)$ for two vertices $i$ and $j$ in $G$. By the induction hypothesis, at most one of the vertices $i^{t-1}$ and $j^{t-1}$ is not dominated by $D^{t-1}$. Therefore, at most one vertex among $ij^{t-1}$ and $ji^{t-1}$ is not dominated by $D^t$. Thus, if  $ij^{t-1}$ is not dominated by $D^t$, then all its neighbors in $iS_G^{t-1}$ are dominated by $D^t$ and also $ji^{t-1}$ is dominated by $D^t$.
We conclude that $D^t$ is an isolating set of $S_G^t$ with the property that no two extreme vertices $i^t,j^t$ with $ij\in E(G)$ belong to $V(S_G^t)-N[D^t]$.

Note that $|D^t|=|D^2|n(G)^{t-2}=\xi(S_G^2)n(G)^{t-2}$. The proof is complete.
\end{proof}

Note that $\xi(S_G^2) \geq \io(G)$ and if $G$ is a graph with the property that there exists an $\io(S_G^2)$-set $D$ so that among any two extreme vertices $ii,jj$ of $S_G^2$, where $ij\in E(G)$, at most one is not dominated by $D$, then $\xi(S_G^2)=\io(S_G^2)$. Then we get the following. 

\begin{corollary}\label{cor:upperBound}
    If $G$ is a graph with $\xi(S_G^2)=\io(S_G^2)$ and $t\ge 2$, then $$\io(S_G^t)\leq \io(S_G^2)n(G)^{t-2}.$$ 
\end{corollary}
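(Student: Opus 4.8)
The corollary in question is Corollary~\ref{cor:upperBound}, which states: if $G$ is a graph with $\xi(S_G^2)=\io(S_G^2)$ and $t\ge 2$, then $\io(S_G^t)\leq \io(S_G^2)n(G)^{t-2}$.

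The plan is to derive this as an immediate consequence of Theorem~\ref{thm:upperBound} once we establish the hypothesis $\xi(S_G^2)=\io(S_G^2)$ connects properly. Theorem~\ref{thm:upperBound} already gives $\io(S_G^t)\le \xi(S_G^2)\cdot n(G)^{t-2}$ for all $t\ge 2$, so if we are simply handed $\xi(S_G^2)=\io(S_G^2)$ as a hypothesis, substitution yields the claim directly. So the entire content of the corollary is a one-line substitution, and there is essentially no obstacle at all.

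More precisely, I would first recall from the discussion immediately preceding the corollary that $\xi(S_G^2)\ge \io(G)$ always holds, and that whenever there exists an $\io(S_G^2)$-set $D$ such that among any two extreme vertices $ii,jj$ of $S_G^2$ with $ij\in E(G)$ at most one lies in $V(S_G^2)-N[D]$, we have the reverse-type inequality forcing $\xi(S_G^2)=\io(S_G^2)$. (Indeed such a $D$ is by definition an admissible set in the minimization defining $\xi(S_G^2)$ and has cardinality $\io(S_G^2)\le\gamma(S_G^2)$, so $\xi(S_G^2)\le\io(S_G^2)$; combined with the trivial $\xi(S_G^2)\ge\io(S_G^2)$ coming from the fact that every $\xi(S_G^2)$-set is in particular an isolating set, equality follows.) Under the stated hypothesis $\xi(S_G^2)=\io(S_G^2)$, we then just invoke Theorem~\ref{thm:upperBound}.

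\begin{proof}
By the definition of $\xi(S_G^2)$, every $\xi(S_G^2)$-set is in particular an isolating set of $S_G^2$, so $\xi(S_G^2)\ge \io(S_G^2)$; by hypothesis this holds with equality, i.e. $\xi(S_G^2)=\io(S_G^2)$. Now apply Theorem~\ref{thm:upperBound}, which gives $\io(S_G^t)\le \xi(S_G^2)\cdot n(G)^{t-2}$ for every $t\ge 2$. Substituting $\xi(S_G^2)=\io(S_G^2)$ yields $\io(S_G^t)\le \io(S_G^2)\cdot n(G)^{t-2}$, as claimed.
\end{proof}

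The only step that requires any thought is confirming that the hypothesis really is equivalent to the ``at most one undominated extreme vertex'' condition described before the statement, but since the corollary takes $\xi(S_G^2)=\io(S_G^2)$ as its literal hypothesis, even that is not needed for the proof itself; it would at most be worth a remark. Thus there is no genuine main obstacle here — the corollary is purely a specialization of Theorem~\ref{thm:upperBound}.
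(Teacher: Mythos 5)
Your proof is correct and coincides with the paper's treatment: the corollary is indeed just Theorem~\ref{thm:upperBound} with the hypothesis $\xi(S_G^2)=\io(S_G^2)$ substituted in, which is exactly how the paper derives it (the preceding remark in the paper only explains when the hypothesis holds, not anything needed in the deduction itself).
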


For $X\subseteq V(G)$, let $G|X$ denote the graph $G$ in which vertices from $X$ are considered as being already dominated. Then it clearly holds $\io(G|X) \leq \io(G)$. Now we are ready to prove a lower bound.

\begin{theorem}\label{thm:lower}
    If $G$ is a graph and $t\geq 2$, then $\io(S_G^t) \geq \io(S_G^2|Ex(S_G^2))\cdot n(G)^{t-2}$.
\end{theorem}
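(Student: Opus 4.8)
The plan is to prove, by induction on $t$, the slightly stronger claim
\[
\io\bigl(S_G^t\,|\,Ex(S_G^t)\bigr)\ \ge\ \io\bigl(S_G^2\,|\,Ex(S_G^2)\bigr)\cdot n(G)^{t-2}
\qquad(t\ge 2),
\]
and then to finish with the trivial inequality $\io(S_G^t)\ge \io(S_G^t\,|\,Ex(S_G^t))$. Write $n=n(G)$ and $c=\io(S_G^2\,|\,Ex(S_G^2))$. The base case $t=2$ is just the identity $\io(S_G^2\,|\,Ex(S_G^2))=c\cdot n^0$, so the work is entirely in the inductive step.

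For $t\ge 3$, let $D$ be a minimum isolating set of $S_G^t\,|\,Ex(S_G^t)$, and for each $i\in V(G)$ put $iD=D\cap V(iS_G^{t-1})$, so that $D$ is the disjoint union of the sets $iD$ and $|D|=\sum_{i\in V(G)}|iD|$. The heart of the argument is to show that, regarding $iS_G^{t-1}$ as a copy of $S_G^{t-1}$, the set $iD$ is an isolating set of $iS_G^{t-1}\,|\,Z_i$, where $Z_i\subseteq Ex(iS_G^{t-1})$ is defined to consist of (a) those extreme vertices of $iS_G^{t-1}$ that are dominated in $S_G^t$ by some vertex of $D$ lying outside $iS_G^{t-1}$, together with (b) the single vertex $i^t$, which is the unique extreme vertex of $S_G^t$ contained in $iS_G^{t-1}$ and hence is already treated as dominated; both types of vertices are extreme vertices of $iS_G^{t-1}$, so $Z_i\subseteq Ex(iS_G^{t-1})$. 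To verify the claim, suppose two adjacent vertices $v,v'$ of $iS_G^{t-1}$ are both undominated by $iD$ inside the copy and both lie outside $Z_i$. Since $D$ isolates $S_G^t\,|\,Ex(S_G^t)$, each of $v,v'$ is either dominated in $S_G^t$ by $D$ or is an extreme vertex of $S_G^t$. In the first case the domination cannot come from inside the copy, so it comes from outside; but the only vertices of $iS_G^{t-1}$ with neighbours outside $iS_G^{t-1}$ are its extreme vertices, so the vertex lies in $Z_i$ — a contradiction. In the second case the vertex equals $i^t\in Z_i$ — again a contradiction. Hence $iD$ isolates $iS_G^{t-1}\,|\,Z_i$.

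Now I invoke monotonicity of isolation under enlarging the set of pre-dominated vertices: if $X\subseteq X'$ then $\io(H\,|\,X')\le \io(H\,|\,X)$, which is immediate since enlarging the pre-dominated set can only shrink the subgraph that must be made edgeless. Applying this with $Z_i\subseteq Ex(iS_G^{t-1})$ and using the induction hypothesis,
\[
|iD|\ \ge\ \io\bigl(iS_G^{t-1}\,|\,Z_i\bigr)\ \ge\ \io\bigl(iS_G^{t-1}\,|\,Ex(iS_G^{t-1})\bigr)\ =\ \io\bigl(S_G^{t-1}\,|\,Ex(S_G^{t-1})\bigr)\ \ge\ c\,n^{t-3}.
\]
Summing over the $n$ copies gives $|D|\ge c\,n^{t-2}$, which completes the induction, and therefore $\io(S_G^t)\ge \io(S_G^t\,|\,Ex(S_G^t))\ge c\,n^{t-2}$, as desired.

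I expect the main obstacle to be the bookkeeping behind the claim that $iD$ isolates $iS_G^{t-1}\,|\,Z_i$: it uses precisely the recursive structure of $S_G^t$ — that inter-copy edges join only an extreme vertex $ij^{t-1}$ to the extreme vertex $ji^{t-1}$ when $ij\in E(G)$, and that the only vertex of $iS_G^{t-1}$ that is an extreme vertex of the whole graph $S_G^t$ is the corner $i^t$ — so that the "externally helped" vertices of the copy, together with that corner, form a subset of $Ex(iS_G^{t-1})$. Once the auxiliary set $Z_i$ is pinned down correctly, the rest is routine.
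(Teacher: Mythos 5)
Your proof is correct, but it follows a genuinely different route from the paper. The paper's argument is a direct, non-inductive one: it views $S_G^t$ as the disjoint union of $n(G)^{t-2}$ copies of $S_G^2$, observes that only the extreme vertices of each such copy have neighbours outside it, concludes that the trace of any $\io(S_G^t)$-set on each copy is an isolating set of $S_G^2|Ex(S_G^2)$, and sums over the copies. You instead peel off one level at a time: you strengthen the statement to $\io(S_G^t|Ex(S_G^t)) \ge \io(S_G^2|Ex(S_G^2))\cdot n(G)^{t-2}$, induct on $t$ via the decomposition into $n(G)$ copies of $S_G^{t-1}$, and handle the boundary effects through the auxiliary pre-dominated set $Z_i$ together with the monotonicity $\io(H|X')\le \io(H|X)$ for $X\subseteq X'$. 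Both arguments rest on the same structural fact (inter-copy edges meet a copy only in its extreme vertices); the paper's version is shorter and avoids induction, while yours isolates the monotonicity lemma explicitly and records the slightly stronger bound on $\io(S_G^t|Ex(S_G^t))$ as a by-product. One small wording point: from the isolation property of $D$ you may only conclude that \emph{at least one} of the adjacent vertices $v,v'$ is dominated by $D$ or lies in $Ex(S_G^t)$, not ``each of'' them; this does not affect your argument, since a single such vertex already lands in $Z_i$ and yields the desired contradiction.
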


\begin{proof}
    Let $D^t$ be an $\io(S_G^t)$-set. Note that $S_G^t$ contains $n(G)^{t-2}$ disjoint copies of $S_G^2$, where only extreme vertices in these copies of $S_G^2$ (possibly) have neighbors in $S_G^t$ that are outside the copy. Let $G'$ be an arbitrary copy of $S_G^2$ in $S_G^t$. Since $D^t \cap V(G')$ is an isolating set of $S_G^2|Ex(S_G^2)$, it follows that $|D^t \cap V(G')| \geq \io(S_G^2|Ex(S_G^2))$ and thus $|D^t| \geq \io(S_G^2|Ex(S_G^2)) \cdot n(G)^{t-2}$.
\end{proof}

In the next results we will see that bounds from Theorem~\ref{thm:upperBound} and Theorem~\ref{thm:lower} are sharp. In fact, the bounds lead to exact values of the isolation numbers in Sierpi\'{n}ski graphs whose base graphs are complete graphs. 

\begin{corollary}\label{prop:Complete}
    If $t\geq 2$ and $n\ge 3  $, then $\io(S_{K_n}^t)=(n-1)\cdot n^{t-2}$.
\end{corollary}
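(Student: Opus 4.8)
The plan is to apply Theorem~\ref{thm:upperBound} and Theorem~\ref{thm:lower} and show that both bounds equal $(n-1)\cdot n^{t-2}$. Since $n(K_n)=n$, it suffices to prove the two sandwiching identities $\io(S_{K_n}^2|Ex(S_{K_n}^2)) \ge n-1$ and $\xi(S_{K_n}^2) \le n-1$; together with $\io(S_{K_n}^2|Ex(S_{K_n}^2)) \le \io(S_{K_n}^2) \le \xi(S_{K_n}^2)$ and the two theorems, this pins down $\io(S_{K_n}^t)$ exactly. (It is worth recording separately that $\gamma(S_{K_n}^2)=n$, which is known and is needed to be sure $\xi$ is well defined via ${\cal I}(S_{K_n}^2)$; an $(n-1)$-element isolating set automatically lies in ${\cal I}$.)

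First I would construct an explicit $\xi(S_{K_n}^2)$-set of size $n-1$. Label $V(K_n)=[n]$, so the vertices of $S_{K_n}^2$ are the words $ij$ with $i,j\in[n]$, and $iS_{K_n}^1$ is a copy of $K_n$ on $\{i1,\dots,in\}$; the only inter-copy edges are $ij \sim ji$ for $i\ne j$. Take $D=\{12,23,34,\dots,(n-1)n\}$, i.e. $D=\{\,i(i{+}1): 1\le i\le n-1\,\}$. In copy $iK_n$ for $1\le i\le n-1$ the chosen vertex $i(i{+}1)$ dominates all of that copy, so the only copy with undominated vertices is $nK_n$, whose vertices $n1,\dots,nn$ are dominated exactly when they have an in-copy neighbor in $D$ or an out-of-copy neighbor in $D$. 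The vertex $nj$ has out-neighbor $jn$; for $1\le j\le n-1$ we have $jn\notin D$ unless $j=n-1$, but each $nj$ with $j\le n-1$ is instead dominated from outside only if $j(j{+}1)$... — more cleanly: I would check directly that with this $D$, inside $nK_n$ every vertex except possibly the extreme vertex $nn$ is dominated, and $nn$ is the unique undominated extreme vertex, so there is no edge between two undominated extreme vertices. (If a cleaner choice is needed, pick $D=\{1j : 2\le j\le n\}\setminus\{1n\}\cup\cdots$; the point is only that some $(n-1)$-set leaves at most one extreme vertex undominated and isolates $S_{K_n}^2$.) This yields $\xi(S_{K_n}^2)\le n-1$.

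Second, the lower bound $\io(S_{K_n}^2|Ex(S_{K_n}^2))\ge n-1$. Suppose $D$ isolates $S_{K_n}^2|Ex$ with $|D|\le n-2$. The copies $1K_n,\dots,nK_n$ partition $V(S_{K_n}^2)$ into $n$ cliques, and within a copy $iK_n$ a vertex of $D\cap V(iK_n)$ dominates the whole copy; a copy disjoint from $D$ can have at most one undominated non-extreme vertex (else two undominated adjacent vertices). Since $|D|\le n-2$, at least two copies, say $aK_n$ and $bK_n$, are disjoint from $D$. In $aK_n$, all vertices except possibly one are undominated; the only way $aj$ (for $j\ne a$, $j$ not the at-most-one exception, and $aj\notin Ex$, i.e. $j\ne a$) gets dominated is via its unique outside neighbor $ja\in D\cap V(jK_n)$. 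So for each of the $\ge n-2$ indices $j\notin\{a,\text{exception}\}$ we need $ja\in D$; similarly $\ge n-2$ vertices of the form $jb$ in $D$; these are disjoint families, forcing $|D|\ge 2(n-2) > n-2$ for $n\ge 3$, a contradiction. The main obstacle is bookkeeping the "at most one undominated vertex per copy" exception and the role of extreme vertices (which are excused from being dominated in $S_G^2|Ex$), so I would set up the counting carefully to make sure the two families $\{ja\}$ and $\{jb\}$ are genuinely disjoint and large enough. Having established $n-1\le \io(S_{K_n}^2|Ex)\le \io(S_{K_n}^2)\le \xi(S_{K_n}^2)\le n-1$, equality propagates through Theorems~\ref{thm:upperBound} and~\ref{thm:lower} to give $\io(S_{K_n}^t)=(n-1)\cdot n^{t-2}$ for all $t\ge 2$.
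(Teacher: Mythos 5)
Your overall strategy is exactly the paper's: sandwich $\iota(S_{K_n}^t)$ between the bounds of Theorems~\ref{thm:lower} and~\ref{thm:upperBound} by proving $\xi(S_{K_n}^2)\le n-1\le \iota(S_{K_n}^2|Ex(S_{K_n}^2))$. Your lower-bound argument is correct (apart from the slip ``all vertices except possibly one are undominated'', which should read ``dominated''): with $|D|\le n-2$ two copies $aK_n$ and $bK_n$ miss $D$, the forced vertices $ja$ and $jb$ are genuinely disjoint because their second letters differ, and $2(n-2)>n-2$ for $n\ge 3$. It can in fact be shortened: $ab$ and $ba$ are adjacent non-extreme vertices neither of which can possibly be dominated, which is an immediate contradiction; this is essentially the paper's argument, and your version is, if anything, more careful about which vertices are exempted in $S_{K_n}^2|Ex(S_{K_n}^2)$. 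Your side remark that $\gamma(S_{K_n}^2)=n$ (so that an $(n-1)$-element isolating set lies in ${\cal I}(S_{K_n}^2)$) is correct and harmless.

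The genuine gap is in the upper bound: the witness $D=\{12,23,\dots,(n-1)n\}$ is not an isolating set of $S_{K_n}^2$, let alone a $\xi(S_{K_n}^2)$-set. The only vertex of the copy $nK_n$ dominated from outside is $n(n-1)$ (via $(n-1)n\in D$); the vertices $n1,\dots,n(n-2),nn$ are all undominated and pairwise adjacent, so your claim that ``inside $nK_n$ every vertex except possibly $nn$ is dominated'' already fails for $n=3$, where $31$ and $33$ are adjacent and undominated. The fallback set you sketch, $\{1j:2\le j\le n\}\setminus\{1n\}\cup\cdots$, is left incomplete and, as written, is not isolating either. The repair is the paper's choice $D=\{1n,2n,\dots,(n-1)n\}$: each $in$ dominates all of its copy $iK_n$, and each non-extreme vertex $ni$ of copy $nK_n$ is dominated by its unique outside neighbor $in\in D$, so the only undominated vertex is the extreme vertex $nn$. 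Hence $D$ is an isolating set of size $n-1\le\gamma(S_{K_n}^2)$ leaving no two adjacent extreme vertices undominated, giving $\xi(S_{K_n}^2)\le n-1$. With that one change of witness your proof coincides with the paper's.
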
   

\begin{proof}
    Let $V(K_n)=[n]$. Clearly $\io(S_{K_n}^2)=n-1$. Since $D=\{1n,2n,\ldots,(n-1)n\}$ is an $\io(S_{K_n}^2)$-set that is also $\xi(S_{K_n}^2)$-set, it follows that $\xi(S_{K_n}^2)=\io(S_{K_n}^2)=n-1$. Thus Theorem~\ref{thm:upperBound} implies that $\io(S_{K_n}^t) \leq (n-1)n^{t-2}$.

    For the converse, let $D$ be an $\io(S_{K_n}^2|Ex(S_{K_n}^2))$-set. Suppose that $\io(S_{K_n}^2|Ex(S_{K_n}^2)) \leq n-2$. Hence, there are at least two copies of $K_n$ in $S_{K_n}^2$, say $iS_{K_n}^1$ and $jS_{K_n}^1$, that contain no vertices from $D$. Since $V(iS_{K_n}^1) \cap D=\emptyset$ and $ii$ has no neighbors in $V(S_{K_n}^2)-V(iS_{K_n}^1)$, all $n-1$ non-extreme vertices of $iS_{K_{n}}^1$ are dominated from outside this copy and the extreme vertex $ii$ is isolated by $D$.  Since $V(jS_{K_n}^1) \cap D =\emptyset$, two adjacent vertices $ii$ and $ij$ are not dominated by $D$, a contradiction. Hence $\io(S_{K_n}^2|Ex(S_{K_n}^2)) \geq n-1$. Since $\io(S_{K_n}^2) = n-1 \geq \io(S_{K_n}^2|Ex(S_{K_n}^2)) \geq n-1$, we deduce that $\io(S_{K_n}^2|Ex(S_{K_n}^2)) = n-1$. Thus Theorem~\ref{thm:lower} implies that $\io(S_{K_n}^t) \geq (n-1)n^{t-2}$.
\end{proof}

\begin{figure}[ht!]
\begin{center}
\begin{tikzpicture}[scale=0.8  ,style=thick,x=1cm,y=1cm]
\def\vr{3pt}

\begin{scope}[xshift=-10cm, yshift=3cm] 
\coordinate(x1) at (0,0);
\coordinate(x2) at (0,1);
\coordinate(x3) at (1,1);
\coordinate(x4) at (1,0);
\draw (x1) -- (x2) -- (x3) -- (x4) -- (x1);

\foreach \i in {1,...,4}
{ 
\draw(x\i)[fill=white] circle(\vr);
}
\draw(x1)[fill=black] circle(\vr);

\draw[below] (x1)++(0,-0.1) node {$1$};
\draw[above] (x2)++(0,0) node {$2$};
\draw[above] (x3)++(0,0) node {$3$};
\draw[below] (x4)++(0,-0.1) node {$4$};
\end{scope}

\begin{scope}[xshift=-6cm, yshift=2cm] 
\coordinate(x1) at (0,0);
\coordinate(x2) at (1,0);
\coordinate(x3) at (2,0);
\coordinate(x4) at (3,0);
\coordinate(x5) at (0,1);
\coordinate(x6) at (1,1);
\coordinate(x7) at (2,1);
\coordinate(x8) at (3,1);
\coordinate(x9) at (0,2);
\coordinate(x10) at (1,2);
\coordinate(x11) at (2,2);
\coordinate(x12) at (3,2);
\coordinate(x13) at (0,3);
\coordinate(x14) at (1,3);
\coordinate(x15) at (2,3);
\coordinate(x16) at (3,3);
\draw (x1) -- (x2) -- (x6) -- (x5) -- (x1);
\draw (x2) -- (x3) -- (x4) -- (x8) -- (x7) -- (x3);
\draw (x8) -- (x12) -- (x16) -- (x15) -- (x11) -- (x12);
\draw (x15) -- (x14) -- (x13) -- (x9) -- (x10) -- (x14);
\draw (x5) -- (x9);
\foreach \i in {1,...,16}
{ 
\draw(x\i)[fill=white] circle(\vr);
}
\draw(x14)[fill=black] circle(\vr);
\draw(x8)[fill=black] circle(\vr);
\draw(x1)[fill=black] circle(\vr);


\draw[below] (x1)++(0,-0.1) node {$11$};
\draw[below] (x2)++(0,-0.1) node {$14$};
\draw[below] (x3)++(0,-0.1) node {$41$};
\draw[below] (x4)++(0,-0.1) node {$44$};
\draw[above] (x13)++(0,0.1) node {$22$};
\draw[above] (x14)++(0,0.1) node {$23$};
\draw[above] (x15)++(0,0.1) node {$32$};
\draw[above] (x16)++(0,0.1) node {$33$};
\draw[left] (x5)++(0,0) node {$12$};
\draw[left] (x9)++(0,0) node {$21$};
\draw[right] (x8)++(0,0) node {$43$};
\draw[right] (x12)++(0,0) node {$34$};
\draw[right] (x10)++(-0.05,0) node {$24$};
\draw[below] (x11)++(0,0.) node {$31$};
\draw[left] (x7)++(0.,0) node {$42$};
\draw[above] (x6)++(0,0) node {$13$};
\end{scope}

\begin{scope}[xshift=0cm, yshift=0cm] 
\coordinate(x1) at (0,0);
\coordinate(x2) at (1,0);
\coordinate(x3) at (2,0);
\coordinate(x4) at (3,0);
\coordinate(x5) at (0,1);
\coordinate(x6) at (1,1);
\coordinate(x7) at (2,1);
\coordinate(x8) at (3,1);
\coordinate(x9) at (0,2);
\coordinate(x10) at (1,2);
\coordinate(x11) at (2,2);
\coordinate(x12) at (3,2);
\coordinate(x13) at (0,3);
\coordinate(x14) at (1,3);
\coordinate(x15) at (2,3);
\coordinate(x16) at (3,3);
\draw (x1) -- (x2) -- (x6) -- (x5) -- (x1);
\draw (x2) -- (x3) -- (x4) -- (x8) -- (x7) -- (x3);
\draw (x8) -- (x12) -- (x16) -- (x15) -- (x11) -- (x12);
\draw (x15) -- (x14) -- (x13) -- (x9) -- (x10) -- (x14);
\draw (x5) -- (x9);
\draw (3,0) -- (4,0);
\draw (0,3) -- (0,4);
\draw (7,3) -- (7,4);
\draw (3,7) -- (4,7);
\foreach \i in {1,...,16}
{ 
\draw(x\i)[fill=white] circle(\vr);
}
\draw(x14)[fill=black] circle(\vr);
\draw(x8)[fill=black] circle(\vr);
\draw(x1)[fill=black] circle(\vr);

\draw[below] (x1)++(0,-0.1) node {$111$};

\end{scope}

\begin{scope}[xshift=4cm, yshift=0cm] 
\coordinate(x1) at (0,0);
\coordinate(x2) at (1,0);
\coordinate(x3) at (2,0);
\coordinate(x4) at (3,0);
\coordinate(x5) at (0,1);
\coordinate(x6) at (1,1);
\coordinate(x7) at (2,1);
\coordinate(x8) at (3,1);
\coordinate(x9) at (0,2);
\coordinate(x10) at (1,2);
\coordinate(x11) at (2,2);
\coordinate(x12) at (3,2);
\coordinate(x13) at (0,3);
\coordinate(x14) at (1,3);
\coordinate(x15) at (2,3);
\coordinate(x16) at (3,3);
\draw (x1) -- (x2) -- (x6) -- (x5) -- (x1);
\draw (x2) -- (x3) -- (x4) -- (x8) -- (x7) -- (x3);
\draw (x8) -- (x12) -- (x16) -- (x15) -- (x11) -- (x12);
\draw (x15) -- (x14) -- (x13) -- (x9) -- (x10) -- (x14);
\draw (x5) -- (x9);
\foreach \i in {1,...,16}
{ 
\draw(x\i)[fill=white] circle(\vr);
}
\draw(x14)[fill=black] circle(\vr);
\draw(x8)[fill=black] circle(\vr);
\draw(x1)[fill=black] circle(\vr);
\draw[below] (x4)++(0,-0.1) node {$444$};

\end{scope}

\begin{scope}[xshift=0cm, yshift=4cm] 
\coordinate(x1) at (0,0);
\coordinate(x2) at (1,0);
\coordinate(x3) at (2,0);
\coordinate(x4) at (3,0);
\coordinate(x5) at (0,1);
\coordinate(x6) at (1,1);
\coordinate(x7) at (2,1);
\coordinate(x8) at (3,1);
\coordinate(x9) at (0,2);
\coordinate(x10) at (1,2);
\coordinate(x11) at (2,2);
\coordinate(x12) at (3,2);
\coordinate(x13) at (0,3);
\coordinate(x14) at (1,3);
\coordinate(x15) at (2,3);
\coordinate(x16) at (3,3);
\draw (x1) -- (x2) -- (x6) -- (x5) -- (x1);
\draw (x2) -- (x3) -- (x4) -- (x8) -- (x7) -- (x3);
\draw (x8) -- (x12) -- (x16) -- (x15) -- (x11) -- (x12);
\draw (x15) -- (x14) -- (x13) -- (x9) -- (x10) -- (x14);
\draw (x5) -- (x9);
\foreach \i in {1,...,16}
{ 
\draw(x\i)[fill=white] circle(\vr);
}
\draw(x14)[fill=black] circle(\vr);
\draw(x8)[fill=black] circle(\vr);
\draw(x1)[fill=black] circle(\vr);

\draw[above] (x13)++(0,0.1) node {$222$};
\end{scope}

\begin{scope}[xshift=4cm, yshift=4cm] 
\coordinate(x1) at (0,0);
\coordinate(x2) at (1,0);
\coordinate(x3) at (2,0);
\coordinate(x4) at (3,0);
\coordinate(x5) at (0,1);
\coordinate(x6) at (1,1);
\coordinate(x7) at (2,1);
\coordinate(x8) at (3,1);
\coordinate(x9) at (0,2);
\coordinate(x10) at (1,2);
\coordinate(x11) at (2,2);
\coordinate(x12) at (3,2);
\coordinate(x13) at (0,3);
\coordinate(x14) at (1,3);
\coordinate(x15) at (2,3);
\coordinate(x16) at (3,3);
\draw (x1) -- (x2) -- (x6) -- (x5) -- (x1);
\draw (x2) -- (x3) -- (x4) -- (x8) -- (x7) -- (x3);
\draw (x8) -- (x12) -- (x16) -- (x15) -- (x11) -- (x12);
\draw (x15) -- (x14) -- (x13) -- (x9) -- (x10) -- (x14);
\draw (x5) -- (x9);
\foreach \i in {1,...,16}
{ 
\draw(x\i)[fill=white] circle(\vr);
}
\draw(x14)[fill=black] circle(\vr);
\draw(x8)[fill=black] circle(\vr);
\draw(x1)[fill=black] circle(\vr);
\draw[above] (x16)++(0,0.1) node {$333$};

\end{scope}

\end{tikzpicture}
\caption{$S_{C_4}^1$ (left), $S_{C_4}^2$ (middle) and  $S_{C_4}^3$ (right); in the latter only labels of extreme vertices are shown. Shaded vertices present an $\iota$-set in the corresponding graph.}
\label{fig:S_C_4}
\end{center}
\end{figure}
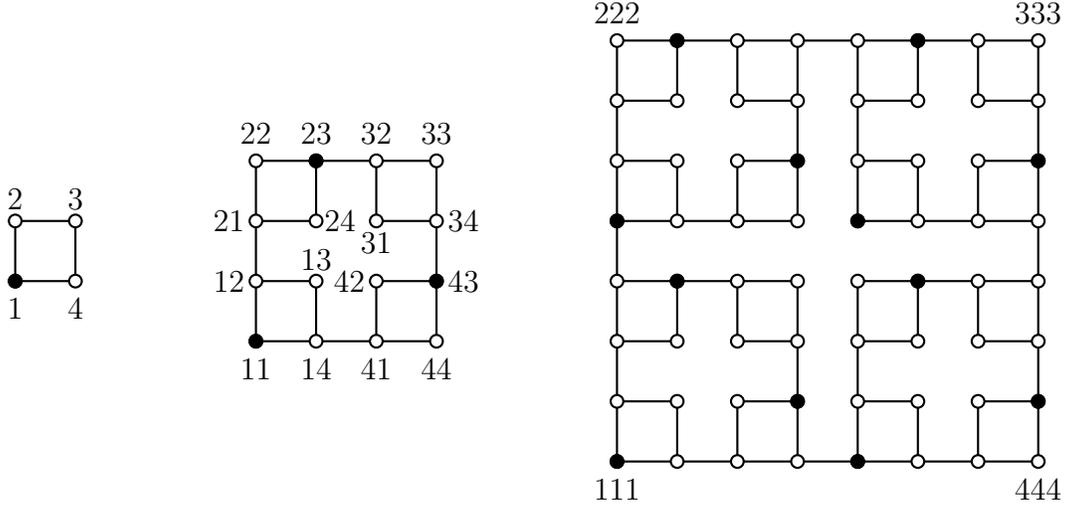

It is easy to check that the bounds from Theorems~\ref{thm:upperBound} and \ref{thm:lower} are sharp also for $G=C_4$. Note that in this case $\xi(S_{C_4}^2)=\io(S_{C_4}^2)=\io(S_{C_4}^2|Ex(S_{C_4}^2))=3$, see Fig.~\ref{fig:S_C_4}.

\begin{corollary}\label{prop:cycles}
For every $n\in\mathbb{N}-\{1\}$, $\iota(S_{C_4}^n)=3\cdot 4^{n-2}$.
   \end{corollary}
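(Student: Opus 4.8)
The plan is to apply Corollary~\ref{cor:upperBound} and Theorem~\ref{thm:lower} simultaneously, exactly as was done for complete graphs in Corollary~\ref{prop:Complete}. The key point is that for $G = C_4$ all three relevant invariants of the $2$-dimensional Sierpi\'{n}ski graph coincide: $\xi(S_{C_4}^2) = \iota(S_{C_4}^2) = \iota(S_{C_4}^2 \mid Ex(S_{C_4}^2)) = 3$. Once this chain of equalities is established, Corollary~\ref{cor:upperBound} gives $\iota(S_{C_4}^n) \le 3\cdot 4^{n-2}$ and Theorem~\ref{thm:lower} gives $\iota(S_{C_4}^n) \ge 3\cdot 4^{n-2}$, and we are done.

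First I would pin down $\iota(S_{C_4}^2)$. Label $V(C_4) = \{1,2,3,4\}$ with edges $12, 23, 34, 41$ (so $1$ and $3$ are nonadjacent, as are $2$ and $4$). Using the shaded set $D = \{11, 43, 23\}$ from Figure~\ref{fig:S_C_4}, one checks directly that $S_{C_4}^2 - N[D]$ is edgeless: the vertices left undominated form an independent set. Hence $\iota(S_{C_4}^2) \le 3$. For the matching lower bound $\iota(S_{C_4}^2) \ge 3$, I would argue that no set of size $2$ can be isolating. Each copy $iS_{C_4}^1$ is a $4$-cycle on $\{i1, i2, i3, i4\}$, and the four copies are joined along the four "inter-copy" edges $12\text{-}21$, $23\text{-}32$, $34\text{-}43$, $41\text{-}14$; a short case analysis (placing the two chosen vertices, and noting each vertex dominates at most $3$ vertices in its own copy plus possibly one extreme vertex in an adjacent copy) shows an edge always survives. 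Then I would observe that the particular set $D = \{11, 43, 23\}$ already avoids leaving two adjacent extreme vertices undominated (the only extreme vertex it fails to dominate, if any, is isolated among extreme vertices), which gives $\xi(S_{C_4}^2) \le 3$; combined with $\xi(S_{C_4}^2) \ge \iota(S_{C_4}^2) = 3$ we get $\xi(S_{C_4}^2) = 3$.

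Next I would handle the constrained invariant $\iota(S_{C_4}^2 \mid Ex(S_{C_4}^2)) = 3$. The upper bound $\iota(S_{C_4}^2 \mid Ex(S_{C_4}^2)) \le \iota(S_{C_4}^2) = 3$ is immediate from the monotonicity remark $\iota(G \mid X) \le \iota(G)$ preceding Theorem~\ref{thm:lower}. For the lower bound, suppose for contradiction that some set $D$ with $|D| \le 2$ is an isolating set of $S_{C_4}^2 \mid Ex(S_{C_4}^2)$. Since there are four copies of $C_4$ and $|D| \le 2$, at least two copies $iS_{C_4}^1$ and $jS_{C_4}^1$ contain no vertex of $D$; because $C_4$ is connected one may choose $i,j$ with $ij \in E(C_4)$, or in any case some copy $iS_{C_4}^1$ that is disjoint from $D$ and whose extreme vertex $ii$ has a neighbor in another $D$-free copy along an inter-copy edge. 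The non-extreme vertices of such a $D$-free copy have all their neighbors inside that copy, so they must be dominated from outside, which is impossible; at best the extreme vertex is treated as dominated, but then the surviving non-extreme vertices of the copy form an edge. A careful version of this argument (mirroring the complete-graph case in Corollary~\ref{prop:Complete}) yields the contradiction, so $\iota(S_{C_4}^2 \mid Ex(S_{C_4}^2)) \ge 3$, hence equals $3$.

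Finally, having $\xi(S_{C_4}^2) = \iota(S_{C_4}^2) = 3$ lets me invoke Corollary~\ref{cor:upperBound} with $n(C_4) = 4$ to conclude $\iota(S_{C_4}^n) \le 3 \cdot 4^{n-2}$ for all $n \ge 2$, and $\iota(S_{C_4}^2 \mid Ex(S_{C_4}^2)) = 3$ lets me invoke Theorem~\ref{thm:lower} to conclude $\iota(S_{C_4}^n) \ge 3 \cdot 4^{n-2}$. Together these give the claimed equality $\iota(S_{C_4}^n) = 3 \cdot 4^{n-2}$. I expect the main obstacle to be the small but slightly fiddly $2$-vertex case analyses establishing the two lower bounds $\iota(S_{C_4}^2) \ge 3$ and $\iota(S_{C_4}^2 \mid Ex(S_{C_4}^2)) \ge 3$; everything else is a direct application of the general machinery already proved, and the generous symmetry of $S_{C_4}^2$ (the dihedral symmetry of $C_4$ together with the recursive structure) should keep the number of cases manageable, or one can simply cite Figure~\ref{fig:S_C_4} for the explicit isolating set and verify edgelessness by inspection.
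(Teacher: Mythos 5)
Your top-level strategy coincides with the paper's: verify $\xi(S_{C_4}^2)=\iota(S_{C_4}^2)=\iota(S_{C_4}^2\,|\,Ex(S_{C_4}^2))=3$ and then apply Theorem~\ref{thm:upperBound} (or Corollary~\ref{cor:upperBound}) together with Theorem~\ref{thm:lower}; your verification of the upper bounds via the set $\{11,23,43\}$ of Figure~\ref{fig:S_C_4} is correct. The gap is in the step you yourself single out as the crux, namely $\iota(S_{C_4}^2\,|\,Ex(S_{C_4}^2))\ge 3$. Unlike in the complete-graph case, with $|D|=2$ you cannot always pick two $D$-free copies $i,j$ with $ij\in E(C_4)$: if $D$ has one vertex in copy $2$ and one in copy $4$, the $D$-free copies are $1$ and $3$, which are nonadjacent, so the ``$ij$ and $ji$ are adjacent and undominated'' contradiction is unavailable. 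Your fallback for this case rests on two false statements: an extreme vertex $ii$ has no neighbor outside its own copy (its degree in $S_{C_4}^2$ is $2$), so it cannot have ``a neighbor in another $D$-free copy along an inter-copy edge''; and it is not true that all non-extreme vertices of a $D$-free copy have all their neighbors inside that copy---in copy $i$ exactly the two vertices $ij$ with $ij\in E(C_4)$ have an outside neighbor (namely $ji$), while $ii$ and the vertex $ik$, $k$ the non-neighbor of $i$, do not. So ``mirroring Corollary~\ref{prop:Complete}'' does not close the antipodal case; it needs a separate argument.

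The repair is easy. If copy $i$ is $D$-free, then $ik$, where $k$ is the non-neighbor of $i$ in $C_4$, has all its neighbors inside the copy and is therefore undominated; since it is adjacent to both vertices $ij$ with $ij\in E(C_4)$, these must be dominated from outside, forcing $ji\in D$ for both neighbors $j$ of $i$. Thus each $D$-free copy prescribes two vertices of $D$ in the adjacent copies, so two antipodal $D$-free copies force $|D|\ge 4$, a contradiction, while adjacent $D$-free copies are handled by the $ij$--$ji$ argument. (Alternatively, the twelve non-extreme vertices of $S_{C_4}^2$ induce a $12$-cycle and every closed neighborhood meets this cycle in at most three consecutive vertices, so after deleting the closed neighborhoods of any two vertices an edge of that cycle survives.) With this step fixed, your proof is complete and follows the same route as the paper, which asserts the three equalities by inspection of Figure~\ref{fig:S_C_4}.
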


\section{Concluding remarks}
We conclude the paper with some directions for future research.

In Section~\ref{sec:prelim}, we considered the invariant $\gamma^i$, and proved that $\iota(G) \ge \gamma(G)-\gamma^{i}(G)$ holds for any graph $G$. The invariant $\gamma^i$
was in fact used in~\cite{aha-2009} for proving that chordal graphs satisfy Vizing's conjecture. For this purpose, the lower bound on the domination number of the Cartesian product of two graphs was proved in~\cite{aha-2009}, which involved the product of the domination number of one factor and the independence-domination number of the other factor. We wonder if the analogous inequality holds for the isolation number of the Cartesian product of the graphs.

\begin{problem}
\label{pr:gammai}
Is it  true that for any two graphs $G$ and $H$, 
$$\iota(G\,\Box\, H)\ge \max\{\gamma^i(G)\iota(H),\gamma^i(H)\iota(G)\}?$$
\end{problem}

Since $\gamma(G)\ge \gamma^i(G)$ holds in any graph $G$, the following inequality, if true, would be stronger than the one posed in Problem~\ref{pr:gammai}. Nevertheless, we were not able to prove or disprove either of the two. 

\begin{problem}
Is it  true that for any two graphs $G$ and $H$, 
$$\iota(G\,\Box\, H)\ge \max\{\gamma(G)\iota(H),\gamma(H)\iota(G)\}?$$
\end{problem}

If in the above problem we replace $\gamma$
with $\iota$ we obtain an inequality, which is also open, and can be considered as the isolation variant of the (conjectured) Vizing's inequality. Based on our preliminary investigations of this inequality, we think that it holds true, and pose it as follows. 

\begin{conjecture}
For any two graphs $G$ and $H$, 
$$\iota(G\,\Box\, H)\ge \iota(G)\iota(H).$$
\end{conjecture}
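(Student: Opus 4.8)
The plan is to fix an arbitrary $\iota(G\,\Box\, H)$-set $D$ and bound $|D|$ from below by transferring information between the two directions of the product. First I would dispose of the degenerate cases: if $H$ has no edges then $G\,\Box\, H$ is the disjoint union of $n(H)$ copies of $G$, so $\iota(G\,\Box\, H)=n(H)\iota(G)\ge\iota(G)\iota(H)$; hence we may assume $\iota(G),\iota(H)\ge 1$. The next step is a clean structural lemma: \emph{for every isolating set $D$ of $G\,\Box\, H$, the projection $p_H(D)$ is an isolating set of $H$} (and, symmetrically, so is $p_G(D)$). Indeed, if $hh'\in E(H)$ with $h,h'\notin N_H[p_H(D)]$, then no vertex of $D$ has its second coordinate in $N_H[h]\cup N_H[h']$, so for each $g\in V(G)$ neither $(g,h)$ nor $(g,h')$ is dominated by $D$, contradicting that $D$ isolates the edge $(g,h)(g,h')$. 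This already gives $\iota(G\,\Box\, H)\ge\max\{\iota(G),\iota(H)\}$, and in particular proves the conjecture when $\min\{\iota(G),\iota(H)\}\le 1$; so the interesting range is $\iota(G),\iota(H)\ge 2$.

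For the general bound I would argue fibre by fibre. For $h\in V(H)$ put $A_h=p_G(D\cap G^h)$, $f(h)=|A_h|$, and $B_h=\bigcup_{h'\in N_H(h)}A_{h'}$. A direct check shows that a vertex $(g,h)$ of the fibre $G^h$ lies in $N[D]$ exactly when $g\in N_G[A_h]\cup B_h$; since $D$ isolates all edges of $G^h$, the set $V(G)-(N_G[A_h]\cup B_h)$ is independent in $G$, and therefore $A_h\cup B_h$ is an isolating set of $G$. Hence $f(h)+|B_h|\ge\iota(G)$, and since $|B_h|\le\sum_{h'\in N_H(h)}f(h')$ we obtain $\sum_{h'\in N_H[h]}f(h')\ge\iota(G)$ for every $h\in V(H)$. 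Thus the function $h\mapsto f(h)/\iota(G)$, truncated at $1$ if necessary, is a fractional dominating function of $H$, so summing over $V(H)$ yields $|D|=\sum_h f(h)\ge\iota(G)\,\gamma_f(H)$, where $\gamma_f$ denotes the fractional domination number (the LP relaxation of $\gamma$); by symmetry $|D|\ge\iota(H)\,\gamma_f(G)$ as well. This establishes
\[\iota(G\,\Box\, H) \geq \max\{\iota(G)\,\gamma_f(H),\, \iota(H)\,\gamma_f(G)\},\]
which proves the conjecture whenever $\gamma_f(G)\ge\iota(G)$ or $\gamma_f(H)\ge\iota(H)$ (as holds, e.g., for paths, for complete graphs, and whenever $\min\{\iota(G),\iota(H)\}\le 1$), and in many further cases by integer rounding.

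The remaining case — which I expect to be the genuine obstacle — is when \emph{both} factors have $\gamma_f$ substantially below $\iota$ and the rounding gap is too wide, the prototype being $G=H=K_n\,\Box\, K_n$ for large $n$, where $\gamma_f=n^2/(2n-1)\approx\tfrac12\iota$. To break it I would keep more than the cardinality of $B_h$: the union $\bigcup_{h'\in N_H[h]}A_{h'}$ in fact \emph{contains} an isolating set of $G$ for every $h$, so a Clark--Suen-type scheme that partitions $V(H)$ into ``slabs'' around a minimum isolating set of $H$ and charges each slab by an isolating set of $G$ ought to give $\iota(G\,\Box\, H)\ge c\cdot\iota(G)\iota(H)$ for an absolute constant $c>0$; removing $c$ would then require using slack peculiar to isolation, namely that the undominated set is independent (which vertex covers and dominating sets need not exploit). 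A complementary line is to sharpen the paper's bound $\iota(G\,\Box\, H)\ge\rho_2(H)\iota(G)$: if some minimum isolating set of $H$ is simultaneously a $2$-packing then the conjecture follows at once, so the crux is exactly the graphs (such as $K_n\,\Box\, K_n$) where $\iota$ far exceeds $\rho_2$, and any quantitative trade-off between these two invariants would close the gap. Ultimately the single hardest point is the one that keeps Vizing's conjecture itself open: promoting per-fibre estimates, each of which sees only one coordinate direction, to an honest product with no multiplicative loss.
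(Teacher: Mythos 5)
This statement is posed in the paper as an open conjecture: the authors prove only the weaker lower bound $\iota(G\,\Box\, H)\ge\max\{\rho_2(G)\iota(H),\rho_2(H)\iota(G)\}$ and explicitly leave the multiplicative inequality unresolved, so there is no proof in the paper to compare yours against, and your proposal does not close it either. Your preparatory material is correct and worthwhile: the projection lemma (that $p_H(D)$ is an isolating set of $H$), the fiberwise characterization of $N[D]$, the observation that $A_h\cup B_h$ is an isolating set of $G$ for every $h\in V(H)$ (note $N_G[A_h\cup B_h]\supseteq N_G[A_h]\cup B_h$, so the undominated set only shrinks), and the resulting bound $\iota(G\,\Box\, H)\ge\max\{\iota(G)\gamma_f(H),\iota(H)\gamma_f(G)\}$, with $\gamma_f$ the fractional domination number, all check out; since $\rho_2\le\gamma_f$ by LP duality, this even strengthens the paper's packing-based theorem. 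But none of this reaches $\iota(G)\iota(H)$ in general.

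The genuine gap is exactly the case you flag and then only speculate about: when both factors have $\gamma_f$ well below $\iota$ (the rook's-graph prototype, where $\gamma_f(K_n\,\Box\, K_n)=n^2/(2n-1)\approx\tfrac12\iota(K_n\,\Box\, K_n)$), your inequality loses roughly a factor of $2$, and the two escape routes offered are plans rather than arguments. A Clark--Suen-type slab partition would, by your own account, give at best a constant-factor bound $c\,\iota(G)\iota(H)$ --- and even that is not carried out; one would have to verify that the double-counting survives the weaker conclusion that the undominated set is merely independent rather than empty --- while the hoped-for quantitative trade-off between $\iota$ and $\rho_2$ is left entirely unsubstantiated. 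So the proposal yields correct, nontrivial partial results (stronger than what the paper records), but the decisive step of converting per-fiber estimates into a genuine product bound, the same obstruction that keeps Vizing's conjecture open, is missing, and the statement remains a conjecture.
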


In Section~\ref{sec:Sierp}, we studied generalized Sierpi\'{n}ski graphs of arbitrary base graph $G$, and have bounded the isolation number of $S_G^t$, where $t\ge 2$, as follows:
 $$\io(S_G^2|Ex(S_G^2))\cdot n(G)^{t-2}\le \io(S_G^t) \leq \xi(S_G^2)\cdot n(G)^{t-2}.$$
The bounds have the factor $n(G)^{t-2}$ in common, while they differ in the functions $\io(S_G^2|Ex(S_G^2))$ and $\xi(S_G^2)$, which interestingly both rely on the generalized Sierpi\'{n}ski graph $S_G^t$ with dimension $t=2$. It is natural to ask how far these two values can be, which leads us to the following question.

\begin{problem}
Is there an integer $C\in \mathbb{N}$ such that $\xi(S_G^2)-\io(S_G^2|Ex(S_G^2))<C$ holds for all graphs $G$. 
\end{problem}

On the other hand, even more interesting is the question in which graphs $G$ the equality $\xi(S_G^2)=\io(S_G^2|Ex(S_G^2))$ holds. As proved in Section~\ref{sec:Sierp} such are all complete graphs $K_n$ and cycle $C_4$, which lead to the exact values of the isolation numbers of generalized Sierpi\'{n}ski graphs of these graphs.

\section*{Acknowledgements}
B.B., T.D. and A.T. acknowledge the financial support of the Slovenian Research and Innovation Agency (research core funding No.\ P1-0297, projects N1-0285, N1-0431, and the bilateral project between Slovenia and the USA entitled ``Domination concepts in graphs, project No. BI-US/24-26-036'').

\section*{Declaration of interests}
 
The authors declare that they have no conflict of interest. 

\section*{Data availability}
 
Our manuscript has no associated data.


\begin{thebibliography}{4}

\bibitem{aha-2009} R.~Aharoni, T.~Szab\'{o}, 
Vizing's conjecture for chordal graphs, Discrete Math.\ 309 (2009) 1766--1768.

\bibitem{bc-1979} B.~Bollob\'{a}s, E.J.~Cockayne, 
Graph-theoretic parameters concerning domination, independence, and irredundance,
J. Graph Theory 3 (1979) 241--249.

\bibitem{bg-2024} G.~Boyer, W.~Goddard, 
Disjoint isolating sets and graphs with maximum isolation number,
Discrete Appl.\ Math.\ 356 (2024) 110--116.



\bibitem{BDG-12} B.~Bre\v{s}ar, P.~Dorbec, W.~Goddard, B.L.~Hartnell, M.A.~Henning, S.~Klav\v{z}ar, D.F.~Rall, Vizing's conjecture: a survey and recent results, J. Graph Theory 69 (2012) 46--76.

\bibitem{bkr-2024} B.~Bre\v{s}ar, S.~Klav\v{z}ar, D.F.~Rall, 
Packings in bipartite prisms and hypercubes, 
Discrete Math.\ 347 (2024) Paper No.\ 113875, 6 pp.


\bibitem{ch-2017} Y.~Caro, A.~Hansberg, Partial domination---the isolation number of a graph,   
Filomat 31 (2017) 3925--3944.

\bibitem{er} E.~Estaji, and J.A.~Rodr\'{i}guez-Vel\'{a}zquez, The strong metric dimension of generalized Sierpi\'{n}ski graphs with pendant vertices,
Ars Math.\ Contemp.\ 12 (2017) 127--134.

\bibitem{erv}  A.~Estrada-Moreno, and J.A.~Rodr\'{i}guez-Vel\'{a}zquez,
On the General Randi\'{c} index of polymeric networks modelled by generalized Sierpi\'{n}ski graphs, Discrete Appl.\ Math.\ 263 (2019) 140--151.

\bibitem{gs} J.~Geetha, and K.~Somasundaram, Total coloring of generalized Sierpiński graphs,
Australas.\ J.\ Combin.\ 63 (2015) 58--69.

\bibitem{gkp-2011} S.~Gravier, M.~Kov\v se, and A.~Parreau,  Generalized Sierpi\'nski graphs; in: Posters at EuroComb'11, Budapest, August 29-September 2, 2011, 
\url{http://www.renyi.hu/conferences/ec11/posters/parreau.pdf} (2024--08--21).

\bibitem{gkmmp-13} 
S.~Gravier, M.~Kov\v{s}e, M.~Mollard, J.~Moncel, A.~Parreau, New results on variants of covering codes in Sierpi\'nski graphs, Des.\ Codes Cryptogr. 69 (2013) 181--188.

\bibitem{HIK} R. Hammack, W. Imrich and S. Klav\v{z}ar, Handbook of product graphs, Second Edition, CRC Press, Boca Raton, FL, 2011.

\bibitem{HHH3}
T.~W.~Haynes, S.~T.~Hedetniemi, M.~A.~Henning, Domination in Graphs: Core
Concepts. Series: Springer Monographs in Mathematics, Springer, Cham, 2022.


\bibitem{kmz-16+}
S.~Klav\v{z}ar, A.~M.~Hinz, S.~S.~Zemlji\v{c}, A survey and classification of Sierpi\'{n}ski-type graphs, Discrete Appl.\ Math. 217 (2017) 565--600.


\bibitem{kz} S.~Klav\v zar, and S.S.~Zemlji\v{c}, Connectivity and some other properties of generalized Sierpi\'{n}ski graphs, Appl.\ Anal.\ Discrete Math.\ 12 (2018) 401–-412.

 
\bibitem{kmp-2003} 
S.~Klav\v{z}ar, U.~Milutinovi\'c, C.~Petr, 1-perfect codes in Sierpi\'nski graphs, Bull.\ Aust.\ Math.\ Soc. (2002) 369--384.

\bibitem{lms-2024} M.~Lema\'{n}ska, M.~Mora, M.J.-Souto-Salorio, 
Graphs with isolation number equal to one third of the order,
Discrete Math.\ 347 (2024) Paper No.\ 113903, 10 pp.

\bibitem{lewis} J.R.~Lewis, S.T.~Hedetniemi, T.W.~Haynes, G.H.~Fricke,  Vertex-edge domination, Util.\ Math.\ 81 (2010) 193--213. 

\bibitem{pjk} S.~Prabhu, T.~Janany, and S.~Klav\v{z}ar, Metric dimensions of generalized Sierpi\'{n}ski graphs over squares, Appl.\ Math.\ Comput.\ 505 (2025) Paper No.\ 129528.

\bibitem{rre} J.A.~Rodr\'{i}guez-Vel\'{a}zquez, E.D.~Rodr\'{i}guez-Bazan, and A.~Estrada-Moreno,
On generalized Sierpi\'{n}ski graphs, Discuss.\ Math.\ Graph Theory 37 (2017) 547-560.

\bibitem{spt} T.~K.~\v{S}umenjak, P.~Pavli\v{c}, and A.~Tepeh, 
On the Roman domination in the lexicographic product of graphs.
 Discrete Appl. Math.  160  (2012) 2030--2036.


\bibitem{tjk-2019} S.~Tokunaga, T.~Jiarasuksakun, P.~Kaemawichanurat, 
Isolation number of maximal outerplanar graphs,
Discrete Appl.\ Math.\ 267 (2019) 215--218.

\bibitem{zz-2020} R.~Ziemann, P.~\.{Z}yli\'{n}ski, Vertex-edge domination in cubic graphs, Discrete Math.\ 343 (2020) 112075, 14 pp.

\bibitem{z-2019}
P.~\.{Z}yli\'{n}ski, Vertex-edge domination in graphs, 
Aequationes Math.\ 93 (2019) 735--742. 
 
\end{thebibliography}
\end{document}